\numberwithin{equation}{section} 
\newcounter{cont}[section] 
\newtheorem{thm}[cont]{Theorem}
\newtheorem{prop}[cont]{Proposition}
\newtheorem{lem}[cont]{Lemma}
\theoremstyle{definition}
\theoremstyle{remark}
\newtheorem{rem}[cont]{Remark}
\begin{document}
\baselineskip=16pt

\title[Slow motion for nonlinear damped hyperbolic Allen--Cahn systems]{Slow motion for one-dimensional \\ nonlinear damped hyperbolic Allen--Cahn systems}

\author[R. Folino]{Raffaele Folino}
\address[Raffaele Folino]{Dipartimento di Ingegneria e Scienze dell'Informazione e Matematica, Universit\`a degli Studi dell'Aquila (Italy)}
\email{raffaele.folino@univaq.it}

\keywords{Hyperbolic reaction-diffusion systems; Allen--Cahn equation; metastability; energy estimates}
\subjclass[2010]{35L53, 35B25, 35K57}

\maketitle

\begin{abstract} 
We consider a nonlinear damped hyperbolic reaction-diffusion system in a bounded interval of the real line with homogeneous Neumann boundary conditions
and we study the metastable dynamics of the solutions. 
Using an ``energy approach'' introduced by Bronsard and Kohn \cite{Bron-Kohn} to study slow motion for Allen--Cahn equation
and improved by Grant \cite{Grant} in the study of Cahn--Morral systems, 
we improve and extend to the case of systems the results valid for the hyperbolic Allen--Cahn equation (see \cite{Folino}). 
In particular, we study the limiting behavior of the solutions as $\varepsilon\to0^+$, where $\varepsilon^2$ is the diffusion coefficient, and
we prove existence and persistence of metastable states for a time $T_\varepsilon>\exp(A/\varepsilon)$.
Such metastable states have a \emph{transition layer structure} and the transition layers move with exponentially small velocity.
\end{abstract}

\section{Introduction}
The goal of this paper is to study the metastable dynamics of the solutions to the \emph{nonlinear damped hyperbolic Allen--Cahn system}
\begin{equation}\label{hyp-al-ca-sys}
	\tau \bm{u}_{tt}+G(\bm{u})\bm{u}_t=\varepsilon^2\bm{u}_{xx}+\bm{f}(\bm{u}), \qquad \quad x\in [a,b], \; t>0,
\end{equation}
where $\bm{u}(x,t)\in\mathbb R^m$ is a vector-valued function, $G:\mathbb R^m\rightarrow\mathbb R^{m\times m}$ is a matrix valued function of several variables,
$\bm{f}:\mathbb R^m\rightarrow\mathbb R^m$ is a vector field and $\varepsilon,\tau$ are positive parameters.
Precisely, we are interested in the limiting behavior of the solutions as $\varepsilon\to0^+$,
and we study existence and persistence of metastable states for \eqref{hyp-al-ca-sys}.

System \eqref{hyp-al-ca-sys} is complemented with homogeneous Neumann boundary conditions
\begin{equation}\label{Neumann-sys}
	\bm{u}_x(a,t)=\bm{u}_x(b,t)=0, \qquad\quad \forall t>0,
\end{equation}
and initial data 
\begin{equation}\label{cond-iniz-sys}
	\bm{u}(x,0)=\bm{u}_0(x), \qquad \bm{u}_t(x,0)=\bm{u}_1(x), \qquad \qquad x\in[a,b].
\end{equation} 
We assume that $\bm{f},G$ are smooth functions with $G$ a positive-definite matrix for all $\bm{u}\in\mathbb R^m$,
that is there exists a constant $\alpha>0$ such that 
\begin{equation}\label{g-pos-def}
	G(\bm{u})\bm{v}\cdot\bm{v}\geq\alpha|\bm{v}|^2, \qquad \quad \forall\, \bm{u},\bm{v}\in\mathbb R^m.
\end{equation}
Regarding $\bm{f}$, we suppose that it is a gradient field and $\bm{f}(\bm{u})=-\nabla F(\bm{u})$ 
where $F\in C^3(\mathbb R^m,\mathbb R)$ is a nonnegative function with a finite number ($K\geq2$) of zeros, namely
\begin{equation}\label{F1}
	F(\bm{u})\geq0 \quad \forall \, \bm{u}\in\mathbb R^m, \qquad  \textrm{ and } \quad F(\bm{u})=0 \Longleftrightarrow \bm{u}\in\{\bm{z}_1,\dots,\bm{z}_K\}.
\end{equation}
Moreover, we assume that the Hessian $\nabla^2F$ is positive definite at each zero of $F$:
\begin{equation}\label{F2}
	\nabla^2F(\bm{z}_j)\bm{v}\cdot\bm{v}>0 \quad \textrm{ for all } \, j=1,\dots,K \, \textrm{ and }  \, \bm{v}\in\mathbb R^m\backslash\{0\}. 
\end{equation}
Therefore, $\bm{z}_1,\dots,\bm{z}_K$ are global minimum points of $F$ and stable stationary points for system \eqref{hyp-al-ca-sys}.

In the scalar case $m=1$, system \eqref{hyp-al-ca-sys} becomes
\begin{equation}\label{scalar}
	\tau u_{tt}+g(u)u_t=\varepsilon^2u_{xx}+f(u),
\end{equation}
with $g$ a strictly positive smooth function and $f=-F'$, where the potential $F$ is a nonnegative function with $K$ zeros at $z_1,\dots,z_K$:
$F(z_j)=F'(z_j)=0$ and $F''(z_j)>0$ for any $j=1,\dots,K$.
In the case $K=2$, $F$ is a double-well potential with non-degenerate minima of same depth, and $f$ is a bistable reaction term.
The simplest example is $F(u)=\frac14(u^2-1)^2$, which has two minima in $-1$ and $+1$.

Equation \eqref{scalar} is a hyperbolic variation of the classic \emph{Allen--Cahn equation}
\begin{equation}\label{allencahn}
	u_t=\varepsilon^2u_{xx}+f(u),
\end{equation}
that is a reaction-diffusion equation (of parabolic type), proposed in \cite{Allen-Cahn} to describe the motion of antiphase boundaries in iron alloys.
Reaction-diffusion equations (of parabolic type) undergo the same criticisms of the linear diffusion equation, 
mainly concerning lack of inertia and infinite speed of propagation of disturbances.
To avoid these unphysical properties, many authors proposed hyperbolic variations of the classic reaction-diffusion equation, 
that enter in the framework of \eqref{scalar} for different choices of $g$;
for instance, for $g(u)\equiv1$, we have a \emph{damped nonlinear wave equation}, 
that is the simplest hyperbolic modification of \eqref{allencahn}.
A different hyperbolic modification is obtained by substituting the classic Fick's diffusion law (or Fourier law) with a relaxation relation of Cattaneo--Maxwell type 
(see \cite{Cat}, or \cite{JP89a,JP89b}); in this case, the damping coefficient is $g(u)=1-\tau f'(u)$ 
and if $f=-F'$ with $F$ a double-well potential with non-degenerate minima of same depth,
we have the \emph{Allen--Cahn equation with relaxation} (see \cite{Folino, FLM}). 
Equation \eqref{scalar} has also a probabilistic interpretation: in the case without reaction ($f=0$), it describes a \emph{correlated random walk} 
(see Goldstein \cite{Gol}, Kac \cite{Kac}, Taylor \cite{Tay} and Zauderer \cite{Zau}).

A complete list of papers devoted to equation \eqref{scalar} would be prohibitive; far from being exhaustive,
here we recall some works where the derivation of equation \eqref{scalar} was studied in different contexts: 
Dunbar and Othmer \cite{DunbOthm86},  Hadeler \cite{Hade99},
Holmes \cite{Holmes}, and Mendez et al. \cite{MenFedHor}.  
We also recall that existence and stability of traveling fronts for equation \eqref{scalar}
in the case of bistable reaction term is provided in \cite{Gallay-Joly} for $g\equiv1$,
and in \cite{LMPS} for the Allen--Cahn equation with relaxation, i.e $g=1-\tau f'$.

In analogy to the relaxation case of \eqref{scalar}, let us consider the particular case of \eqref{hyp-al-ca-sys} corresponding to the choice $G(\bm{u})=\mathbb{I}_m-\tau \bm{f}'(\bm{u})$,
where $\bm{f}'(\bm{u})$ is the Jacobian of $\bm{f}$ evaluated at $\bm{u}$.
We call it the {\it ``one-field'' equation} of system
\begin{equation}\label{nonlinearCat}
	\begin{cases}
		\bm{u}_t+\bm{v}_x=\bm{f}(\bm{u}),\\
		\tau \bm{v}_t+\varepsilon^2\bm{u}_x=-\bm{v},
	\end{cases}
\end{equation}
obtained after eliminating the $\bm v$ variable. 
Note that, for $\tau=0$, we formally obtain the reaction-diffusion system
\begin{equation}\label{readiff}
	\bm{u}_t=\varepsilon^2\bm{u}_{xx}+\bm{f}(\bm{u}).
\end{equation}
Some properties (long time behavior, invariance principles, Turing instabilities) 
of systems of the form \eqref{nonlinearCat} with general reaction term $\bm{f}$
have been studied by Hillen in \cite{Hillen1,Hillen2,Hillen3}.

The aim of this paper is twofold:
firstly, we will extend to the case of systems the slow motion results valid for the hyperbolic Allen--Cahn equation \eqref{scalar} (see \cite{Folino});
secondly, we will improve the energy approach used in \cite{Folino} to obtain an exponentially large lifetime of the metastable states.   

Metastable dynamics is characterized by evolution so slow that (non-stationary) solutions \emph{appear} to be stable;
metastability is a broad term describing the persistence of unsteady structures for a very long time.
For the Allen--Cahn model \eqref{allencahn}, this phenomenon was firstly observed in \cite{Bron-Kohn,Carr-Pego,Carr-Pego2,Fusco-Hale}.
In particular, Bronsard and Kohn \cite{Bron-Kohn} introduced an ``energy approach'', 
based on the underlying variational structure of the equation, to study the metastable dynamics of the solutions.
We also recall the study of generation, persistence and annihilation of metastable patterns performed in \cite{Chen}.
In this work, the author studied the persistence of the metastable states by using a different approach, 
known as ``dynamical approach'', proposed by Carr--Pego \cite{Carr-Pego} and Fusco--Hale~\cite{Fusco-Hale}.
In \cite{Bel-Na-No}, the authors provide a variational counterpart of the dynamical results of \cite{Carr-Pego,Fusco-Hale}.
They justify and confirm, from a variational point of view, the results of \cite{Carr-Pego,Fusco-Hale} 
on the exponentially slow motion of the metastable states.

The dynamical approach and the energy one can be adapted and extended to the hyperbolic variation \eqref{scalar}. 
In \cite{FLM}, by using the dynamical approach, the authors show the existence of an ``approximately invariant'' 
$N$-dimensional manifold $\mathcal{M}_{{}_0}$ for the hyperbolic Allen--Cahn equation:
if the initial datum is in a tubular neighborhood of $\mathcal{M}_{{}_0}$, the solution remains in such neighborhood for an exponentially long time.
Moreover, for an exponentially long time, the solution is a function with $N$ transitions between $-1$ and $+1$ (the minima of $F$) 
and the transition points move with exponentially small velocity.
On the other hand, in \cite{Folino}, by using the energy approach, it is proved that if the initial datum $u_0$ has a \emph{transition layer structure}
and the $L^2$--norm of the initial velocity $u_1$ is bounded by $C\varepsilon^{\frac{k+1}2}$,
then in a time scale of order $\varepsilon^{-k}$ nothing happens, and the solution maintains the same number of transitions of its initial datum.

The phenomenon of metastability is present in a very large class of different evolution PDEs. 
It is impossible to quote all the contributes, here we recall that using a similar approach to \cite{Carr-Pego,Fusco-Hale}, 
slow motion results have been proved for the Cahn--Hilliard equation in \cite{AlikBateFusc91,Bates-Xun1,Bates-Xun2}.
The energy approach is performed in \cite{Bron-Hilh} for the classical Cahn--Hilliard equation and in \cite{FLM19} for its hyperbolic variation.
We also recall the study of metastability for scalar conservation laws \cite{FLMS17,KreiKrei86,LafoOMal95,Mascia-Strani,ReynWard95,Strani3},
convection-reaction-diffusion equation \cite{Strani2},
general gradient systems \cite{Otto-Rez}, high-order systems \cite{Kal-VdV-Wan}.

The aforementioned bibliography is confined to one-dimensional scalar models; 
the papers \cite{Be-Or-Sm,Be-Sm,Strani} deal with the extension to the case of systems of the results valid for the scalar reaction-diffusion equations.
In particular, in \cite{Be-Or-Sm} a system of reaction-diffusion equations is considered in the whole real line,
with the reaction term $\bm{f}=-\nabla F$ and $F$ satisfying \eqref{F1}-\eqref{F2};
in \cite{Be-Sm} is considered the degenerate case, that is when $F$ satisfies \eqref{F1}, but not the condition \eqref{F2}. 
Strani \cite{Strani} studied systems of the form \eqref{readiff} in a bounded interval, where $\bm f=-\nabla F$ and $F$ satisfying \eqref{F1}-\eqref{F2} with two distinct minima.
On the other hand, Grant \cite{Grant} extended to Cahn--Morral systems the slow motion results of the Cahn--Hilliard equation, 
by improving the energy approach of Bronsard and Kohn \cite{Bron-Kohn}.
The improvement from superpolynomial to exponential speed is made possible by incorporating some ideas of Alikakos and McKinney \cite{AlikMcKi}
and some techniques of Sternberg \cite{Sternberg}. 
In this paper we use these ideas to improve and extend to the system \eqref{hyp-al-ca-sys} the results of \cite{Folino}.
The key point to apply the energy approach of Bronsard and Kohn in the system \eqref{hyp-al-ca-sys} is the presence
of the modified energy functional 
\begin{equation}\label{energy/eps}
	E_\varepsilon[\bm{u},\bm{u}_t](t):=\frac{\tau}{2\varepsilon}\|\bm u_t(\cdot,t)\|^2_{{}_{L^2}}+P_\varepsilon[\bm{u}](t),
\end{equation}
where 
\begin{equation*}
	\|\bm u_t(\cdot,t)\|^2_{{}_{L^2}}:=\int_a^b |\bm{u}_t(x,t)|^2dx \quad \mbox{ and } \quad 
	P_\varepsilon[\bm{u}](t):=\int_a^b\left[\frac\varepsilon2 |\bm{u}_x(x,t)|^2+\frac{F(\bm{u}(x,t))}\varepsilon\right]dx.
\end{equation*}
The modified energy functional defined in \eqref{energy/eps} is a nonincreasing function of time $t$ along the solutions of \eqref{hyp-al-ca-sys}-\eqref{Neumann-sys}.
Indeed, if $\bm{u}$ is a solution of \eqref{hyp-al-ca-sys} with homogeneous Neumann boundary conditions \eqref{Neumann-sys}, then
\begin{equation}\label{energy/eps-var}
	\varepsilon^{-1}\int_0^T\int_a^b G(\bm{u})\bm{u}_t\cdot\bm{u}_t\,dxdt=E_\varepsilon[\bm{u},\bm{u}_t](0)-E_\varepsilon[\bm{u},\bm{u}_t](T).
\end{equation}
The proof of \eqref{energy/eps-var} is in Appendix \ref{well-posed} (see Proposition \ref{prop-var-ener}). 
It follows that the assumption on $G$ implies the dissipative character of the system \eqref{hyp-al-ca-sys}. 
In particular, using \eqref{g-pos-def} and \eqref{energy/eps-var}, we obtain
\begin{equation}\label{dissipative}
	\varepsilon^{-1}\alpha\int_0^T\int_a^b |\bm{u}_t|^2 dxdt\leq E_\varepsilon[\bm{u},\bm{u}_t](0)-E_\varepsilon[\bm{u},\bm{u}_t](T).
\end{equation}
Note that the functional $P_\varepsilon$ is the modified energy functional for the parabolic case \eqref{readiff}
and we have a new term concerning the $L^2$--norm of $\bm u_t$ in the hyperbolic case.
Inequality \eqref{dissipative} is crucial in the use of the energy approach.
Let us briefly explain the strategy of this approach.
We remark that every piecewise constant function $\bm v$ assuming values in $\{\bm z_1,\dots,\bm z_K\}$ is a stationary solution of \eqref{hyp-al-ca-sys} with $\varepsilon=0$.
Fix $\bm{v}:[a,b]\rightarrow\{\bm{z}_1,\dots,\bm{z}_K\}$ having exactly $N$ jumps located at $a<\gamma_1<\gamma_2<\cdots<\gamma_N<b$. 
In this way, we fix the number of the \emph{transition layers} and their positions in $(a,b)$ as $\varepsilon\rightarrow0$.
Fix $r$ so small that $(\gamma_i-r,\gamma_i+r)\subset[a,b]$ for any $i$ and 
\begin{equation*}
	(\gamma_i-r,\gamma_i+r)\cap(\gamma_j-r,\gamma_j+r)=\emptyset, \qquad \quad \mbox{ for } i\neq j.
\end{equation*}
In the scalar case $m=1$, the minimum energy to have a transition between the two equilibrium points $-1$ and $+1$ is $c_0:=\displaystyle\int_{-1}^{+1}\sqrt{2F(s)}\,ds$. 
In general, for $m\geq1$, from Young's inequality and the positivity of the term $\frac{\tau}{2\varepsilon}\|\bm u_t\|^2_{{}_{L^2}}$, it follows that
\begin{equation}\label{E-Young}
	E_\varepsilon[\bm{u},\bm{u}_t](t)\geq P_\varepsilon[\bm{u}](t)\geq\sqrt2\int_a^b\sqrt{F(\bm{u}(x,t))}|\bm{u}_x(x,t)|dx.
\end{equation}
This justifies the use of the modified energy \eqref{energy/eps}; 
indeed, the right hand side of inequality \eqref{E-Young} is strictly positive and does not depend on $\varepsilon$. 
For \eqref{E-Young}, we assign to the discontinuous function $\bm{v}$ the asymptotic energy
\begin{equation*}
	P_0[\bm{v}]:=\sum_{i=1}^N\phi(\bm{v}(\gamma_i-r),\bm{v}(\gamma_i+r)),
\end{equation*}
where
\begin{equation*}
\phi(\bm{\xi}_1,\bm{\xi}_2):=\inf\left\{J[\bm{z}]: \bm{z}\in \emph{AC}([a,b],\mathbb R^m), \bm{z}(a)=\bm{\xi}_1, \bm{z}(b)=\bm{\xi}_2\right\}
\end{equation*}
and
\begin{equation*}
	J[\bm{z}]:=\sqrt2\int_a^b\sqrt{F(\bm{z}(s))}|\bm{z}'(s)|ds.
\end{equation*}	
It is easy to check that $\phi$ is a metric on $\mathbb R^m$. 
Moreover, Young's inequality and a change of variable imply that 
\begin{equation*}
	P_\varepsilon[\bm{z};c,d]\geq\phi(\bm{z}(c),\bm{z}(d)),
\end{equation*}
for all $a\leq c<d\leq b$, where we use the notation $P_\varepsilon[\bm{z};c,d]$, 
when the integral in \eqref{energy/eps} is over the interval $[c,d]$ instead of $[a,b]$. 
From \eqref{E-Young}, it follows that $P_0[\bm v]$ is the minimum energy to have $N$ transitions between the equilibrium points $\bm z_1,\dots,\bm z_K$.

When $\varepsilon>0$ the function $\bm v$ is not a stationary solution of \eqref{hyp-al-ca-sys};
we consider an initial datum $\bm{u}_0\in H^1([a,b])^m$ that is close to $\bm v$ in $L^1$ for $\varepsilon$ small. 
Precisely, we assume that the initial data $\bm{u}_0$, $\bm{u}_1$ depend on $\varepsilon$ and
\begin{equation}\label{L1}
	\lim_{\varepsilon\rightarrow 0} \|\bm{u}_0^\varepsilon-\bm{v}\|_{{}_{L^1}}=0.
\end{equation}
Moreover, we assume that for all $\varepsilon\in(0,\hat\varepsilon)$, at the time $t=0$, the modified energy \eqref{energy/eps} satisfies
\begin{equation}\label{energy0}
	E_\varepsilon[\bm{u}_0^\varepsilon, \bm{u}_1^\varepsilon]\leq P_0[\bm{v}]+C\exp(-A/\varepsilon),
\end{equation}
for some constants $A,C>0$. 
We say that a family of functions $\bm{u}_0^\varepsilon$ has a \emph{transition layer structure} if $\bm{u}_0^\varepsilon$ satisfies \eqref{L1} and 
\begin{equation*}
	P_\varepsilon[\bm{u}_0^\varepsilon]\leq P_0[\bm{v}]+C\exp(-A/\varepsilon).
\end{equation*}
Notice that $P_\varepsilon[\bm{u}_0^\varepsilon]\leq E_\varepsilon[\bm{u}_0^\varepsilon, \bm{u}_1^\varepsilon]$, 
thus \eqref{L1} and \eqref{energy0} imply that $\bm{u}_0^\varepsilon$ has a transition layer structure.
We will prove that there exists a time $T_\varepsilon\gg0$ such that
\begin{equation}\label{energyTeps}
	E_\varepsilon[\bm{u},\bm{u}_t](T_\varepsilon)\geq P_0[\bm{v}]-C_1\exp(-A/\varepsilon),
\end{equation}
for some $C_1>0$. 
Substituting \eqref{energy0} and \eqref{energyTeps} in \eqref{dissipative}, we deduce 
\begin{equation}\label{ut-small}
	\int_0^{T_\varepsilon}\int_a^b |\bm{u}_t|^2 dxdt\leq C_2\varepsilon\exp(-A/\varepsilon).
\end{equation}
Then, if $\varepsilon$ is small and $T_\varepsilon$ is very large, there is a very little excess of energy 
and the evolution of the solution is very slow.
We will prove that the solution maintains the same transitions of its initial datum $\bm{u}_0^\varepsilon$ for an exponentially large time.
Let us stress that \eqref{energyTeps} accounts for the improvements with respect to \cite{Folino}, 
because it implies \eqref{ut-small}, and so the $L^2$--norm of $\bm u_t$ is exponentially small.
The estimate \eqref{ut-small} permits to prove the main result of this paper.
\begin{rem}
Let us remark that $G$ is a positive-definite matrix for all $\bm{u}\in\mathbb R^m$, and the function $F$ vanishes only on a finite number of points.
As we already mentioned, the assumption \eqref{g-pos-def} is crucial in our proofs, 
because it implies the dissipative character of the system \eqref{hyp-al-ca-sys} 
and we can obtain the estimate \eqref{ut-small} on the time derivative of the solution.
In the case $G\equiv0$ we have a nonlinear wave equation of the form
\begin{equation*}
	\tau \bm{u}_{tt}=\varepsilon^2\bm{u}_{xx}+\bm{f}(\bm{u}),
\end{equation*}
which exhibits different dynamics (see \cite{Bel-Nov-Orl,Jerrard} and references therein, 
where the authors studied the case when $\bm f=-\nabla F$ and the potential $F$ vanishes on the unit circle).
We also underline that, in this paper, we consider the case of a bounded interval of the real line, 
and we use the boundedness of the domain in an essential way in some key estimates.
\end{rem}
The rest of the paper is organized as follows.
Section \ref{metastability}, the main section of the paper, is devoted to the analysis of metastability, and it contains the main result, Theorem \ref{main-hyp-sys}.
In Section \ref{transitionlayer} we construct an example of family of functions that has a transition layer structure.
These functions are metastable states for \eqref{hyp-al-ca-sys}-\eqref{Neumann-sys}.
Section \ref{layerdin} contains the study of the motion of the transition layers;
in particular, we prove that they move with exponentially small velocity (see Theorem \ref{thm:interface}).
Finally, in Appendix \ref{well-posed} we study the well-posedness of the initial boundary value problem 
\eqref{hyp-al-ca-sys}-\eqref{Neumann-sys}-\eqref{cond-iniz-sys} in the energy space ${H^1([a,b])}^m\times {L^2(a,b)}^m$.

\section{Metastability}\label{metastability}
In this section we study metastability of solutions to the nonlinear damped hyperbolic Allen--Cahn system \eqref{hyp-al-ca-sys}, 
where $\bm{u}\in\mathbb R^m$, with homogeneous Neumann boundary conditions \eqref{Neumann-sys}.
Fix $\bm{v}:[a,b]\rightarrow\{\bm{z}_1,\dots,\bm{z}_K\}$ having exactly $N$ jumps located at $a<\gamma_1<\gamma_2<\cdots<\gamma_N<b$. 
Fix $r$ so small that $B(\gamma_i,r)\subset[a,b]$ for any $i$ and 
\begin{equation*}
	B(\gamma_i,r)\cap B(\gamma_j,r)=\emptyset, \qquad \quad \mbox{ for } i\neq j.
\end{equation*}
Here and below $B(\gamma,r)$ is the open ball of center $\gamma$ and of radius $r$ in the relevant space. 
For $j=1,\dots,K$, denote by $\lambda_j$ (respectively, $\Lambda_j$) the minimum (resp. maximum) of the eigenvalues of $\nabla^2 F(\bm{z}_j)$.
If $\lambda=\displaystyle\min_j\lambda_j$ and $\Lambda=\displaystyle\max_j\Lambda_j$, we have for any $j=1,\dots,K$
\begin{equation}\label{hessian}
	0<\lambda|\bm y|^2\leq\nabla^2 F(\bm{z}_j)\bm y\cdot\bm y\leq\Lambda|\bm y|^2, \qquad \quad \forall\, \bm y\in\mathbb R^m.
\end{equation}
Now, we present the lower bound on the energy, which allows us to proof our main result. 
This result is purely variational in character and concerns only the functional $P_\varepsilon$; system \eqref{hyp-al-ca-sys} plays no role. 
The idea of the proof is the same of \cite[Lemma 2.1]{Grant}, we repeat it here for the convenience of the reader. 
\begin{prop}\label{prop:lower}
Assume that $F:\mathbb R^m\rightarrow\mathbb R$ satisfies \eqref{F1}-\eqref{F2}.
Let $\bm{v}:[a,b]\rightarrow\{\bm{z}_1,\dots,\bm{z}_K\}$ be a function having exactly $N$ jumps located at $a<\gamma_1<\gamma_2<\cdots<\gamma_N<b$
and let $A$ be a positive constant less than $r\sqrt{2\lambda}$. 
Then, there exist constants $C,\delta>0$ (depending only on $F,\bm{v}$ and $A$) such that, 
for $\varepsilon$ sufficiently small, if $\|{\bm{u}-\bm{v}}\|_{L^1}\leq\delta$, then
\begin{equation}\label{lower}
	P_\varepsilon[\bm{u}]\geq P_0[\bm{v}]-C\exp(-A/\varepsilon).
\end{equation}
\end{prop}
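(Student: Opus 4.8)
The plan is to estimate, for each $i$, the portion of the energy $P_\varepsilon[\bm u]$ concentrated on the ball $B(\gamma_i, r)$, and to show that on each such ball the energy is at least $\phi(\bm v(\gamma_i - r), \bm v(\gamma_i + r)) - (C/N)\exp(-A/\varepsilon)$; summing over $i = 1, \dots, N$ then yields \eqref{lower} since $P_\varepsilon[\bm u] \ge \sum_i P_\varepsilon[\bm u; \gamma_i - r, \gamma_i + r]$ by nonnegativity of the integrand. The first step is to exploit the $L^1$-closeness hypothesis: if $\|\bm u - \bm v\|_{L^1} \le \delta$ with $\delta$ small, then by the mean value / Chebyshev argument there exist points $x_i^- \in (\gamma_i - r, \gamma_i - r/2)$ and $x_i^+ \in (\gamma_i + r/2, \gamma_i + r)$ at which $\bm u(x_i^\pm)$ is within, say, a small fixed distance $\rho$ of $\bm v(\gamma_i \mp r) = \bm z_{j_i^{\mp}}$, the appropriate minima. (One chooses $\rho$ small enough that the balls $B(\bm z_j, \rho)$ are disjoint and the quadratic lower bound \eqref{hessian} on $F$ holds up to a constant on each.)

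The second step, which is the heart of the matter, is a lower bound of the form
\begin{equation*}
	P_\varepsilon[\bm u; x_i^-, x_i^+] \ge \phi\bigl(\bm u(x_i^-), \bm u(x_i^+)\bigr) - \frac{C}{N}\exp(-A/\varepsilon).
\end{equation*}
The inequality $P_\varepsilon[\bm u; c, d] \ge \phi(\bm u(c), \bm u(d))$ already holds exactly by Young's inequality, so at first sight no exponential correction is needed here; the exponential term enters because I must then replace $\phi(\bm u(x_i^-), \bm u(x_i^+))$ by $\phi(\bm z_{j_i^-}, \bm z_{j_i^+}) = \phi(\bm v(\gamma_i - r), \bm v(\gamma_i + r))$, and $\bm u(x_i^\pm)$ is only $\rho$-close (not equal) to the minima. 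The clean way to handle this, following Grant and Sternberg, is \emph{not} to use the crude continuity of $\phi$ but to argue directly: on the sub-interval $(x_i^+, x_i^+ + s)$ where $\bm u$ travels from $\bm u(x_i^+)$ into (or stays near) $\bm z_{j_i^+}$, one shows that either $\bm u$ stays inside $B(\bm z_{j_i^+}, \rho)$ — in which case, using $F(\bm u) \ge \tfrac{\lambda}{4}|\bm u - \bm z_{j_i^+}|^2$ there and the fact that the interval has length at least $r/2$, a Gronwall-type / ODE comparison forces $|\bm u(x) - \bm z_{j_i^+}|$ to decay like $\exp(-\sqrt{\lambda/2}\,|x - x_i^+|/\varepsilon)$, so that at the far end the deviation is $O(\exp(-A/\varepsilon))$ provided $A < r\sqrt{2\lambda}$ (this is precisely where the hypothesis on $A$ is used) — or $\bm u$ exits $B(\bm z_{j_i^+}, \rho)$, in which case the extra arc length costs a fixed positive amount of $\phi$-energy, far more than any exponential. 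Combining the two cases and doing the symmetric analysis at $x_i^-$, one recovers the $i$-th term of $P_0[\bm v]$ up to the exponential error.

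For the ODE/decay estimate I would work with the scalar quantity $\theta(x) := |\bm u(x) - \bm z_j|$ (or its square) on the region where $\bm u$ remains in $B(\bm z_j, \rho)$: from $P_\varepsilon$ one extracts $\int (\tfrac{\varepsilon}{2}|\bm u_x|^2 + \tfrac{1}{\varepsilon} F(\bm u)) \le$ (total budget), and pairing this with the pointwise bound $F(\bm u) \ge \tfrac{\lambda}{4}\theta^2$ and $|\theta'| \le |\bm u_x|$ gives, via Young again, $\int \sqrt{2}\,\tfrac{\sqrt\lambda}{2}\,\theta\,|\theta'| \le$ excess energy over $\phi$; then a differential-inequality argument of the type $\bigl(\log \theta\bigr)' \le -\sqrt{\lambda/2}/\varepsilon$ (valid as long as the energy is near-minimal) yields the exponential smallness at distance $r/2$ from $x_i^+$. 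The main obstacle, and the step requiring the most care, is exactly this dichotomy-plus-decay estimate: making rigorous the claim that near-minimality of the energy on $B(\gamma_i, r)$ forces $\bm u$ to be exponentially close to a single minimum near the endpoints $\gamma_i \pm r$, with the sharp exponent tied to $\lambda$ so that the constraint $A < r\sqrt{2\lambda}$ appears naturally. Everything else — choosing $\delta, \rho$, the pigeonhole selection of $x_i^\pm$, and the final summation — is routine once this core lemma is in place.
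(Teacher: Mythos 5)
Your overall architecture --- localizing to the balls $B(\gamma_i,r)$, pigeonholing points where $\bm u$ is $\rho$-close to the wells, a dichotomy between ``stays in the ball'' and ``exits the ball'', and reassembling via the triangle inequality for $\phi$ --- matches the paper's. But the core step, the exponential decay estimate, is carried out on the wrong object, and as stated it fails. You claim that if $\bm u$ stays in $B(\bm z_j,\rho)$ on the outer subinterval, then a Gronwall/differential-inequality argument extracted from the energy budget forces $|\bm u(x)-\bm z_j|$ to decay like $\exp(-\sqrt{\lambda/2}\,|x-x_i^+|/\varepsilon)$. This is false for a general competitor: the hypothesis gives no pointwise control, and integrated near-equality in Young's inequality does not yield the pointwise bound $(\log\theta)'\leq-\sqrt{\lambda/2}/\varepsilon$ (it constrains neither the sign nor the size of $\theta'$ at any given point). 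Concretely, a function sitting at constant distance $\varepsilon$ from $\bm z_j$ on the outer interval stays in the ball, pays only $O(\varepsilon)$ extra energy --- far more than exponentially small, hence admissible --- and has endpoint deviation $\varepsilon$, not $\exp(-A/\varepsilon)$. The proposition still holds for such a $\bm u$, but only because the plateau's extra energy compensates the loss in $\phi$; your argument does not perform this bookkeeping.

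The missing idea (Grant's, reproduced in the paper) is to prove the decay not for $\bm u$ but for the \emph{constrained minimizer} $\bm z$ of $P_\varepsilon[\,\cdot\,;\gamma_i+r_+,\gamma_i+r]$ subject only to $\bm z(\gamma_i+r_+)=\bm u(\gamma_i+r_+)$, the outer endpoint being free. Then $P_\varepsilon[\bm u;\cdot]\geq P_\varepsilon[\bm z;\cdot]$ by minimality, and $\bm z$ genuinely satisfies the Euler--Lagrange ODE $\bm z''=\varepsilon^{-2}\nabla F(\bm z)$ with the natural condition $\bm z'(\gamma_i+r)=0$; setting $\psi=|\bm z-\bm v_i^+|^2$ gives $\psi''\geq(\mu/\varepsilon)^2\psi$, and the maximum principle yields $\psi(\gamma_i+r)\leq 2\exp(-A/\varepsilon)\,\psi(\gamma_i+r_+)$, whence $\phi(\bm v_i^+,\bm z(\gamma_i+r))\leq C\exp(-A/\varepsilon)$ because $\phi$ is quadratic in the deviation near a nondegenerate well. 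Two further points: (i) the full range $A<r\sqrt{2\lambda}$ is reached only because the deviation enters $\phi$ \emph{quadratically} (doubling the decay rate) and because the good points are chosen within $\hat r\ll r$ of $\gamma_i$, so the decay acts over a length $r-\hat r$ close to $r$; your choice $x_i^+\in(\gamma_i+r/2,\gamma_i+r)$ leaves an outer interval of length at most $r/2$ and, even granting your decay claim, would cap the exponent at $r\sqrt{2\lambda}/2$; (ii) your pigeonhole statement has the signs swapped: $\bm u(x_i^\pm)$ should be close to $\bm v(\gamma_i\pm r)$.
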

\begin{proof}
Let $Q$ be a compact set of $\mathbb R^m$ containing $F^{-1}(\{0\})$ in its interior and $\nu:=\sup\left\{\|\nabla^3F(\zeta)\|: \zeta\in Q\right\}$. 
Choose $\hat r>0$ and $\rho_1$ so small that $A\leq(r-\hat r)\sqrt{2\lambda-m\nu\rho_1}$ and 
that $B(\bm{z}_j,\rho_1)$ is contained in $Q$ for each $\bm{z}_j\in F^{-1}(\{0\})$. 
Choose $\rho_2$ so small that
\begin{align*}
	\inf&\bigl\{\phi(\bm{\xi}_1,\bm{\xi}_2): \bm{\xi}_1\notin B(\bm{z}_j,\rho_1), \bm{\xi}_2\in B(\bm{z}_j,\rho_2), \bm{z}_j\in F^{-1}(\{0\})\bigr\} \\
	&>\sup\bigl\{\phi(\bm{z}_j,\bm{\xi}_2): \bm{z}_j\in F^{-1}(\{0\}), \bm{\xi}_2\in B(\bm{z}_j,\rho_2)\bigr\},
\end{align*}
and $|\bm{z}_j-\bm{z}_l|>2\rho_2$ if $\bm{z}_j$ and $\bm{z}_l$ are different zeros of $F$. 

Now, let us focus our attention on $B(\gamma_i,r)$, a neighborhood of one of the transition points of $\bm{v}$. 
For convenience, let $\bm{v}_i^+:=\bm{v}(\gamma_i+r)$ and $\bm{v}_i^-:=\bm{v}(\gamma_i-r)$. 
We claim that there is some $r_+\in(0,\hat r)$ such that
\begin{equation*}
	|\bm{u}(\gamma_i+r_+)-\bm{v}_i^+|<\rho_2.
\end{equation*}
Indeed, if $|\bm{u}-\bm{v}|\geq\rho_2$ throughout $(\gamma_i,\gamma_i+\hat r)$, then  
\begin{equation*}
	\|\bm{u}-\bm{v}\|_{L^1}\geq\int_{\gamma_i}^{\gamma_i+\hat r} |\bm{u}-\bm{v}|\geq\hat r\rho_2>\delta,
\end{equation*}
if $\delta<\hat r\rho_2$, contrary to assumption on $\bm{u}$. 
Similarly, there is some $r_-\in(0,\hat r)$ such that
\begin{equation*}
	|\bm{u}(\gamma_i-r_-)-\bm{v}_i^-|<\rho_2.
\end{equation*}
Next, following \cite{Grant}, consider the unique minimizer $\bm{z}:[\gamma_i+r_+,\gamma_i+r]\rightarrow\mathbb R^m$ 
of the functional $P_\varepsilon[\bm{z};\gamma_i+r_+,\gamma_i+r]$ subject to the boundary condition 
\begin{equation*}
	\bm{z}(\gamma_i+r_+)=\bm{u}(\gamma_i+r_+).
\end{equation*}
If the range of $\bm{z}$ is not contained in $B(\bm{v}_i^+,\rho_1)$, then
\begin{align}
	P_\varepsilon[\bm{z};\gamma_i+r_+,\gamma_i+r] & \geq\inf\bigl\{\phi(\bm{z}(\gamma_i+r_+),\xi): \xi\notin B(\bm{v}_i^+,\rho_1)\bigr\} \nonumber\\
	&\geq\phi(\bm{z}(\gamma_i+r_+),\bm{v}_i^+), \label{E>fi}
\end{align}
by the choice of $r_+$ and $\rho_2$. 
Suppose, on the other hand, that the range of $\bm{z}$ is contained in $B(\bm{v}_i^+,\rho_1)$. 
Then, the Euler--Lagrange equation for $\bm{z}$ is
\begin{align*}
	\bm{z}''(x)=\varepsilon^{-2}\nabla F(\bm{z}(x)), \quad \qquad x\in(\gamma_i+r_+,\gamma_i+r),\\
	\bm{z}(\gamma_i+r_+)=\bm{u}(\gamma_i+r_+), \quad \qquad \bm{z}'(\gamma_i+r)=0.
\end{align*}
Denoting by $\psi(x):=|\bm{z}(x)-\bm{v}_i^+|^2$, we have $\psi'(x)=2(\bm{z}-\bm{v}_i^+)\cdot\bm{z}'$ and 
\begin{equation*}
	\psi''(x)=2(\bm{z}-\bm{v}_i^+)\cdot\bm{z}''+2|\bm{z}'|^2\geq\frac2{\varepsilon^2}(\bm{z}-\bm{v}_i^+)\cdot\nabla F(\bm{z}(x)).
\end{equation*}
Since $|\bm{z}(x)-\bm{v}_i^+|\leq\rho_1$ for any $x\in[\gamma_i+r_+,\gamma_i+r]$, using Taylor's expansion 
\begin{equation*}
	\nabla F(\bm{z}(x))=\nabla F(\bm{v}_i^+)+\nabla^2 F(\bm{v}_i^+)(\bm{z}(x)-\bm{v}_i^+)+R=\nabla^2 F(\bm{v}_i^+)(\bm{z}(x)-\bm{v}_i^+)+R,
\end{equation*}
where $|R|\leq m\nu|\bm{z}-\bm{v}_i^+|^2/2$, we obtain
\begin{align*}
	\psi''(x) & \geq \frac{2}{\varepsilon^2}\nabla^2 F(\bm{v}_i^+)(\bm{z}(x)-\bm{v}_i^+)\cdot(\bm{z}(x)-\bm{v}_i^+)-\frac{m\nu}{\varepsilon^2}|\bm{z}(x)-\bm{v}_i^+|^3\\
	& \geq \frac{2\lambda}{\varepsilon^2}|\bm{z}(x)-\bm{v}_i^+|^2-\frac{m\nu\rho_1}{\varepsilon^2}|\bm{z}(x)-\bm{v}_i^+|^2\\
	& \geq \frac{\mu^2}{\varepsilon^2}\psi(x),
\end{align*}
where $\mu=A/(r-\hat r)$. 
Thus, $\psi$ satisfies
\begin{align*}
	\psi''(x)-\frac{\mu^2}{\varepsilon^2}\psi(x)\geq0, \quad \qquad x\in(\gamma_i+r_+,\gamma_i+r),\\
	\psi(\gamma_i+r_+)=|\bm{u}(\gamma_i+r_+)-\bm{v}_i^+|^2, \quad \qquad \psi'(\gamma_i+r)=0.
\end{align*}
We compare $\psi$ with the solution $\hat \psi$ of
\begin{align*}
	\hat\psi''(x)-\frac{\mu^2}{\varepsilon^2}\hat\psi(x)=0, \quad \qquad x\in(\gamma_i+r_+,\gamma_i+r),\\
	\hat\psi(\gamma_i+r_+)=|\bm{u}(\gamma_i+r_+)-\bm{v}_i^+|^2, \quad \qquad \hat\psi'(\gamma_i+r)=0,
\end{align*}
which can be explicitly calculated to be
\begin{equation*}
	\hat\psi(x)=\frac{|\bm{u}(\gamma_i+r_+)-\bm{v}_i^+|^2}{\cosh\left[\frac\mu\varepsilon(r-r_+)\right]}\cosh\left[\frac\mu\varepsilon(x-(\gamma_i+r))\right].
\end{equation*}
By the maximum principle, $\psi(x)\leq\hat\psi(x)$ so, in particular,
\begin{equation*}
	\psi(\gamma_i+r)\leq\frac{|\bm{u}(\gamma_i+r_+)-\bm{v}_i^+|^2}{\cosh\left[\frac\mu\varepsilon(r-r_+)\right]}\leq2\exp(-A/\varepsilon)|\bm{u}(\gamma_i+r_+)-\bm{v}_i^+|^2.
\end{equation*}
Then, we have 
\begin{equation}\label{|z-v+|<exp}
	|\bm{z}(\gamma_i+r)-\bm{v}_i^+|\leq\sqrt2\exp(-A/2\varepsilon)\rho_2.
\end{equation}
Now, by using Taylor's expansion for $F(\bm{z}(x))$ and \eqref{hessian}, we obtain
\begin{align*}
	F(\bm{z}(x)) & =F(\bm{v}_i^+)+\nabla F(\bm{v}_i^+)\cdot(\bm{z}(x)-\bm{v}_i^+) \\
	& +\tfrac12\left(\nabla^2F(\bm{v}_i^+)(\bm{z}(x)-\bm{v}_i^+)\right)\cdot(\bm{z}(x)-\bm{v}_i^+)+o(|\bm{z}(x)-\bm{v}_i^+|^2)\\
	& \leq|\bm{z}(x)-\bm{v}_i^+|^2\left(\frac\Lambda2+\frac{o(|\bm{z}(x)-\bm{v}_i^+|^2)}{|\bm{z}(x)-\bm{v}_i^+|^2}\right).
\end{align*}
Similarly, one has 
\begin{equation*}
	F(\bm{z}(x))\geq|\bm{z}(x)-\bm{v}_i^+|^2\left(\frac\lambda2+\frac{o(|\bm{z}(x)-\bm{v}_i^+|^2)}{|\bm{z}(x)-\bm{v}_i^+|^2}\right). 
\end{equation*}
Therefore, since the range of $\bm{z}$ is contained in $B(\bm{v}_i^+,\rho_1)$, if $\rho_1$ is sufficiently small, then
\begin{equation}\label{F-quadratic}
	\tfrac14\lambda|\bm{z}(x)-\bm{v}_i^+|^2\leq F(\bm{z}(x))\leq\Lambda|\bm{z}(x)-\bm{v}_i^+|^2.
\end{equation}
Let us introduce the line segment
\begin{equation*}
	\hat{\bm{z}}(y):=\bm{v}_i^++\frac{y-a}{b-a}\left(\bm{z}(\gamma_i+r)-\bm{v}_i^+\right), \qquad \quad a\leq y\leq b.
\end{equation*}
We have $\hat{\bm{z}}(a)=\bm{v}_i^+$, $\hat{\bm{z}}(b)=\bm{z}(\gamma_i+r)$,
\begin{equation*}
	\hat{\bm{z}}'(y)=\frac1{b-a}(\bm{z}(\gamma_i+r)-\bm{v}_i^+), \qquad \quad |\hat{\bm{z}}(y)-\bm{v}_i^+|\leq|\bm{z}(\gamma_i+r)-\bm{v}_i^+|,
\end{equation*}
for any $y\in[a,b]$. 
Using \eqref{|z-v+|<exp} and \eqref{F-quadratic}, we obtain
\begin{align}
	\phi(\bm{v}_i^+,\bm{z}(\gamma_i+r)) & \leq\sqrt2\int_a^b\sqrt{F(\hat{\bm{z}}(y))}|\hat{\bm{z}}'(y)|dy \nonumber\\
	& \leq\frac{\sqrt{2\Lambda}}{b-a}|\bm{z}(\gamma_i+r)-\bm{v}_i^+)|\int_a^b|\hat{\bm{z}}(y)-\bm{v}_i^+|dy\nonumber\\
	& \leq\sqrt{2\Lambda}|\bm{z}(\gamma_i+r)-\bm{v}_i^+)|^2\nonumber\\
	& \leq2\sqrt{2\Lambda}\,\rho_2^2\,\exp(-A/\varepsilon). \label{fi<exp}
\end{align}
From \eqref{fi<exp} it follows that, for some constant $C>0$, 
\begin{align}
	P_\varepsilon[\bm{z};\gamma_i+r_+,\gamma_i+r] & \geq\phi(\bm{z}(\gamma_i+r_+),\bm{z}(\gamma_i+r)) \nonumber\\
	& \geq \phi(\bm{z}(\gamma_i+r_+),\bm{v}_i^+)-\phi(\bm{v}_i^+,\bm{z}(\gamma_i+r)) \nonumber\\
	& \geq \phi(\bm{z}(\gamma_i+r_+),\bm{v}_i^+)-\tfrac{C}{2N}\exp(-A/\varepsilon). \label{E>fi-exp}
\end{align}
Combining \eqref{E>fi} and \eqref{E>fi-exp}, we get that the constrained minimizer $\bm{z}$ of the proposed variational problem satisfies
\begin{equation*}	
	P_\varepsilon[\bm{z};\gamma_i+r_+,\gamma_i+r]\geq\phi(\bm{z}(\gamma_i+r_+),\bm{v}_i^+)-\tfrac{C}{2N}\exp(-A/\varepsilon).
\end{equation*}
The restriction of $\bm{u}$ to $[\gamma+r_+,\gamma+r]$ is an admissible function, so it must satisfy the same estimate
\begin{equation*}
	\begin{aligned}
	P_\varepsilon[\bm{u};\gamma_i+r_+,\gamma_i+r]&\geq P_\varepsilon[\bm{z};\gamma_i+r_+,\gamma_i+r]\\
	&\geq\phi(\bm{u}(\gamma_i+r_+),\bm{v}_i^+)-\tfrac{C}{2N}\exp(-A/\varepsilon).
	\end{aligned}
\end{equation*}
Considering the interval $[\gamma_i-r,\gamma_i-r_-]$, we obtain a similar estimate. Hence,
\begin{align*}
	P_\varepsilon[\bm{u};\gamma_i-r,\gamma_i+r] & =P_\varepsilon[\bm{u};\gamma_i-r,\gamma_i-r_-]+P_\varepsilon[\bm{u};\gamma_i-r_-,\gamma_i+r_+] \\
	& \quad +P_\varepsilon[\bm{u};\gamma_i+r_+,\gamma_i+r] \\
	& \geq\phi(\bm{v}_i^-,\bm{u}(\gamma_i-r_-))-\tfrac{C}{2N}\exp(-A/\varepsilon) \\
	& \quad +\phi(\bm{u}(\gamma_i-r_-),\bm{u}(\gamma_i+r_+))\\ 
	& \quad + \phi(\bm{u}(\gamma_i+r_+),\bm{v}_i^+)-\tfrac{C}{2N}\exp(-A/\varepsilon) \\
	& \geq \phi(\bm{v}(\gamma_i-r),\bm{v}(\gamma_i+r))-\tfrac{C}N\exp(-A/\varepsilon).
\end{align*}
These estimates hold for any $i=1,\dots,N$. Assembling all of these estimates, we have
\begin{equation*}
	P_\varepsilon[\bm{u}]\geq\sum_{i=1}^NP_\varepsilon[\bm{u};\gamma_i-r,\gamma_i+r]\geq P_0[\bm{v}]-C\exp(-A/\varepsilon),
\end{equation*}
and the proof is complete.
\end{proof}
Let us stress that Proposition \ref{prop:lower} extends and improves Proposition 2.1 of \cite{Folino}.
The sharp estimate \eqref{lower} is crucial in the proof of our main result.
Thanks to the equality \eqref{energy/eps-var} for the modified energy and the lower bound \eqref{lower}, 
we can use the energy approach in the study of the nonlinear damped hyperbolic Allen--Cahn system \eqref{hyp-al-ca-sys} 
with homogeneous Neumann boundary conditions \eqref{Neumann-sys} and initial data \eqref{cond-iniz-sys}. 
Let us proceed as in the scalar case $m=1$.

Regarding the initial data \eqref{cond-iniz-sys}, we assume that $\bm{u}_0$, $\bm{u}_1$ depend on $\varepsilon$ and 
\begin{equation}\label{|u_0-v|_1}
	\lim_{\varepsilon\rightarrow 0} \|\bm{u}_0^\varepsilon-\bm{v}\|_{{}_{L^1}}=0.
\end{equation}
In addition, we suppose that there exist constants $A\in(0,r\sqrt{2\lambda})$ and $\hat\varepsilon>0$ such that,
for all $\varepsilon\in(0,\hat\varepsilon)$, at the time $t=0$, the modified energy \eqref{energy/eps} satisfies
\begin{equation}\label{energy<E[v]+exp}
	E_\varepsilon[\bm{u}_0^\varepsilon, \bm{u}_1^\varepsilon]\leq P_0[\bm{v}]+C\exp(-A/\varepsilon),
\end{equation}
for some constant $C>0$.
The condition \eqref{|u_0-v|_1} fixes the number of transitions and their relative positions as $\varepsilon\to0$. 
The condition \eqref{energy<E[v]+exp} requires that the energy at the time $t=0$ exceeds at most $C\exp(-A/\varepsilon)$ the minimum possible to have these $N$ transitions.
Using \eqref{dissipative} and Proposition \ref{prop:lower}, we can prove the following result.

\begin{prop}\label{prop:L2-norm}
Assume that $G$ satisfies \eqref{g-pos-def} and that $\bm{f}=-\nabla F$ with $F$ satisfying \eqref{F1}-\eqref{F2}. 
Let $\bm{u}^\varepsilon$ be solution of \eqref{hyp-al-ca-sys}-\eqref{Neumann-sys}-\eqref{cond-iniz-sys} 
with initial data $\bm{u}_0^{\varepsilon}$, $\bm{u}_1^{\varepsilon}$ satisfying \eqref{|u_0-v|_1} and \eqref{energy<E[v]+exp}.
Then, there exist positive constants $\varepsilon_0, C_1, C_2>0$ (independent on $\varepsilon$) such that
\begin{equation}\label{L2-norm}
	\int_0^{C_1\varepsilon^{-1}\exp(A/\varepsilon)}\|\bm{u}_t^\varepsilon\|^2_{{}_{L^2}}dt\leq C_2\varepsilon \exp(-A/\varepsilon),
\end{equation}
for all $\varepsilon\in(0,\varepsilon_0)$.
\end{prop}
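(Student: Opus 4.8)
The plan is a continuity (bootstrap) argument that closes the circularity between the $L^1$ closeness of the solution to $\bm{v}$ and the sharp lower bound \eqref{lower}. First fix the constant $\delta>0$ provided by Proposition \ref{prop:lower} (applicable since $A\in(0,r\sqrt{2\lambda})$, as required in \eqref{energy<E[v]+exp}); using \eqref{|u_0-v|_1} and shrinking $\hat\varepsilon$, we may assume $\|\bm{u}_0^\varepsilon-\bm{v}\|_{L^1}\le\delta/2$ for all $\varepsilon\in(0,\hat\varepsilon)$. Since the initial--boundary value problem is well posed in $H^1([a,b])^m\times L^2(a,b)^m$ (Appendix \ref{well-posed}), we have $\bm{u}^\varepsilon\in C([0,\infty);H^1)$ and $\bm{u}^\varepsilon_t\in C([0,\infty);L^2)$, so $t\mapsto\|\bm{u}^\varepsilon(\cdot,t)-\bm{v}\|_{L^1}$ is continuous and we may define
\[
	T^\varepsilon:=\sup\bigl\{t\ge0:\ \|\bm{u}^\varepsilon(\cdot,s)-\bm{v}\|_{L^1}\le\delta\ \text{ for all }\ s\in[0,t]\bigr\}\in(0,+\infty].
\]

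On $[0,T^\varepsilon]$ the hypothesis of Proposition \ref{prop:lower} holds at every time, so, using also $E_\varepsilon\ge P_\varepsilon$,
\[
	E_\varepsilon[\bm{u}^\varepsilon,\bm{u}^\varepsilon_t](t)\ \ge\ P_\varepsilon[\bm{u}^\varepsilon](t)\ \ge\ P_0[\bm{v}]-\tilde C\exp(-A/\varepsilon),\qquad t\in[0,T^\varepsilon],
\]
where $\tilde C$ is the constant of Proposition \ref{prop:lower}. Combining this with the initial energy bound \eqref{energy<E[v]+exp} and the dissipation inequality \eqref{dissipative} gives, for all $t\in[0,T^\varepsilon]$,
\[
	\frac{\alpha}{\varepsilon}\int_0^t\|\bm{u}^\varepsilon_s\|^2_{{}_{L^2}}\,ds\ \le\ E_\varepsilon[\bm{u}_0^\varepsilon,\bm{u}_1^\varepsilon]-E_\varepsilon[\bm{u}^\varepsilon,\bm{u}^\varepsilon_t](t)\ \le\ (C+\tilde C)\exp(-A/\varepsilon),
\]
that is, $\int_0^t\|\bm{u}^\varepsilon_s\|^2_{{}_{L^2}}\,ds\le C_0\,\varepsilon\exp(-A/\varepsilon)$ on $[0,T^\varepsilon]$, with $C_0:=(C+\tilde C)/\alpha$.

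To propagate this into the $L^1$ distance, write $\bm{u}^\varepsilon(\cdot,t)-\bm{u}_0^\varepsilon=\int_0^t\bm{u}^\varepsilon_s(\cdot,s)\,ds$ in $L^2$ and apply the Cauchy--Schwarz inequality, first in the space variable (here the boundedness of $[a,b]$ enters) and then in time:
\[
	\|\bm{u}^\varepsilon(\cdot,t)-\bm{u}_0^\varepsilon\|_{L^1}\ \le\ \sqrt{b-a}\int_0^t\|\bm{u}^\varepsilon_s\|_{{}_{L^2}}\,ds\ \le\ \sqrt{(b-a)\,t}\left(\int_0^t\|\bm{u}^\varepsilon_s\|^2_{{}_{L^2}}\,ds\right)^{1/2}\le\ \sqrt{C_0(b-a)}\,\sqrt{\varepsilon\,t}\,\exp\!\left(-\tfrac{A}{2\varepsilon}\right),
\]
valid for $t\in[0,T^\varepsilon]$. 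Set $C_1:=\delta^2/\bigl(4C_0(b-a)\bigr)$. Then for $t\le C_1\varepsilon^{-1}\exp(A/\varepsilon)$ the right-hand side is at most $\delta/2$, so $\|\bm{u}^\varepsilon(\cdot,t)-\bm{v}\|_{L^1}\le\|\bm{u}_0^\varepsilon-\bm{v}\|_{L^1}+\delta/2\le\delta$ on $[0,\min\{T^\varepsilon,\,C_1\varepsilon^{-1}\exp(A/\varepsilon)\}]$, with strict inequality for $t<C_1\varepsilon^{-1}\exp(A/\varepsilon)$. Were $T^\varepsilon<C_1\varepsilon^{-1}\exp(A/\varepsilon)$, this would contradict the definition of $T^\varepsilon$ together with the continuity of $t\mapsto\|\bm{u}^\varepsilon(\cdot,t)-\bm{v}\|_{L^1}$; hence $T^\varepsilon\ge C_1\varepsilon^{-1}\exp(A/\varepsilon)$. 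Evaluating the estimate of the second paragraph at $t=C_1\varepsilon^{-1}\exp(A/\varepsilon)\le T^\varepsilon$ then gives \eqref{L2-norm} with $C_2:=C_0$ and $\varepsilon_0:=\hat\varepsilon$ (shrunk, if necessary, so that Proposition \ref{prop:lower} also applies).

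The only genuinely delicate point is the circularity just described: the sharp bound \eqref{lower} is available only while $\bm{u}^\varepsilon(\cdot,t)$ stays within $L^1$-distance $\delta$ of $\bm{v}$, yet that closeness is proved from the dissipation estimate, which in turn relies on \eqref{lower}; the continuity argument breaks the loop. Everything else rests on two structural features already present in the excerpt: the dissipation inequality \eqref{dissipative} converts the exponentially small initial energy excess \eqref{energy<E[v]+exp} into an exponentially small space--time $L^2$ norm of $\bm{u}^\varepsilon_t$, and, on a bounded interval, a Cauchy--Schwarz estimate in space and time upgrades the bound on $\int_0^t\|\bm{u}^\varepsilon_s\|^2_{{}_{L^2}}$ to an $L^1$ displacement of size $O\!\bigl(\sqrt{\varepsilon t}\,\exp(-A/2\varepsilon)\bigr)$, which stays below $\delta$ precisely for times up to order $\varepsilon^{-1}\exp(A/\varepsilon)$.
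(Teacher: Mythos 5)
Your proof is correct and follows essentially the same route as the paper's: Proposition \ref{prop:lower} plus the dissipation inequality \eqref{dissipative} yield the exponentially small space--time $L^2$ bound on $\bm{u}_t^\varepsilon$ as long as the solution stays within $L^1$-distance $\delta$ of $\bm{v}$, and a Cauchy--Schwarz estimate in space and time shows this persists up to times of order $\varepsilon^{-1}\exp(A/\varepsilon)$. The only cosmetic difference is the packaging of the continuation argument: you use an exit time from the $\delta$-ball and continuity, while the paper picks $T_\varepsilon$ as the time at which $\int_0^{T_\varepsilon}\|\bm{u}_t^\varepsilon\|_{L^1}\,dt$ reaches $\delta/2$ and exploits the monotonicity of that integral; both close the same circularity.
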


\begin{proof}
Let $\varepsilon_0>0$ so small that for all $\varepsilon\in(0,\varepsilon_0)$, \eqref{energy<E[v]+exp} holds and 
\begin{equation}\label{1/2delta}
	\|\bm{u}_0^\varepsilon-\bm{v}\|_{{}_{L^1}}\leq\frac12\delta,
\end{equation}
where $\delta$ is the constant of Proposition \ref{prop:lower}. 
Let $T_\varepsilon>0$.
We claim that if
\begin{equation}\label{claim1}
	\int_0^{T_\varepsilon}\|\bm{u}_t^\varepsilon\|_{{}_{L^1}}dt\leq\frac12\delta,
\end{equation}
then there exists $C_2>0$ such that
\begin{equation}\label{claim2}
	E_\varepsilon[\bm{u}^\varepsilon, \bm{u}_t^\varepsilon](T_\varepsilon)\geq P_0[\bm{v}]-C_2\exp(-A/\varepsilon).
\end{equation}
Indeed, $E_\varepsilon[\bm{u}^\varepsilon, \bm{u}_t^\varepsilon](T_\varepsilon)\geq P_\varepsilon[\bm{u}^\varepsilon](T_\varepsilon)$ and 
inequality \eqref{claim2} follows from Proposition \ref{prop:lower} if $\|\bm{u}^\varepsilon(\cdot,T_\varepsilon)-\bm{v}\|_{{}_{L^1}}\leq\delta$.  
By using triangle inequality, \eqref{1/2delta} and \eqref{claim1}, we obtain
\begin{equation*}
	\|\bm{u}^\varepsilon(\cdot,T_\varepsilon)-\bm{v}\|_{{}_{L^1}}\leq\|\bm{u}^\varepsilon(\cdot,T_\varepsilon)-\bm{u}_0^\varepsilon\|_{{}_{L^1}}+\|\bm{u}_0^\varepsilon-\bm{v}\|_{{}_{L^1}}
	\leq\int_0^{T_\varepsilon}\|\bm{u}_t^\varepsilon\|_{{}_{L^1}}+\frac12\delta\leq\delta.
\end{equation*}
Substituting \eqref{claim2} and \eqref{energy<E[v]+exp} in \eqref{dissipative}, one has 
\begin{equation}\label{L2-norm-Teps}
	\int_0^{T_\varepsilon}\|\bm{u}_t^\varepsilon\|^2_{{}_{L^2}}dt\leq C_2\varepsilon \exp(-A/\varepsilon),
\end{equation}
It remains to prove that inequality \eqref{claim1} holds for $T_\varepsilon\geq C_1\varepsilon^{-1}\exp(A/\varepsilon)$.
If 
\begin{equation*}
	\int_0^{+\infty}\|\bm{u}_t^\varepsilon\|_{{}_{L^1}}dt\leq\frac12\delta,
\end{equation*}
there is nothing to prove. Otherwise, choose $T_\varepsilon$ such that
\begin{equation*}
	\int_0^{T_\varepsilon}\|\bm{u}_t^\varepsilon\|_{{}_{L^1}}dt=\frac12\delta.
\end{equation*}
Using H\"older's inequality and \eqref{L2-norm-Teps}, we infer
\begin{equation*}
	\frac12\delta\leq[T_\varepsilon(b-a)]^{1/2}\biggl(\int_0^{T_\varepsilon}\|\bm{u}_t^\varepsilon\|^2_{{}_{L^2}}dt\biggr)^{1/2}\leq\bigl[T_\varepsilon(b-a)C_2\varepsilon \exp(-A/\varepsilon)\bigr]^{1/2}.
\end{equation*}
It follows that there exists $C_1>0$ such that
\begin{equation*}
	T_\varepsilon\geq C_1\varepsilon^{-1}\exp(A/\varepsilon),
\end{equation*}
and the proof is complete.
\end{proof}

Now, we can prove our main result.

\begin{thm}\label{main-hyp-sys}
Assume that $G$ satisfies \eqref{g-pos-def} and that $\bm{f}=-\nabla F$ with $F$ satisfying \eqref{F1}-\eqref{F2}. 
Let $\bm{u}^\varepsilon$ be solution of \eqref{hyp-al-ca-sys}-\eqref{Neumann-sys}-\eqref{cond-iniz-sys} 
with initial data $\bm{u}_0^{\varepsilon}$, $\bm{u}_1^{\varepsilon}$ satisfying \eqref{|u_0-v|_1} and \eqref{energy<E[v]+exp}.
Then, for any $s>0$
\begin{equation}\label{limit-sys}
	\sup_{0\leq t\leq s\exp(A/\varepsilon)}\|\bm{u}^\varepsilon(\cdot,t)-\bm{v}\|_{{}_{L^1}}\xrightarrow[\varepsilon\rightarrow0]{}0.
\end{equation}
\end{thm}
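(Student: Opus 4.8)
The plan is to combine the $L^2$-estimate on $\bm u_t^\varepsilon$ from Proposition~\ref{prop:L2-norm} with a Cauchy--Schwarz argument in time to control the $L^1$-displacement of the solution from its initial datum, and then invoke hypothesis~\eqref{|u_0-v|_1} to pass to $\bm v$. First I would fix $s>0$ and observe that for $t\in[0,s\exp(A/\varepsilon)]$,
\begin{equation*}
	\|\bm u^\varepsilon(\cdot,t)-\bm u_0^\varepsilon\|_{{}_{L^1}}\leq\int_0^t\|\bm u_t^\varepsilon(\cdot,\sigma)\|_{{}_{L^1}}\,d\sigma
	\leq\int_0^{s\exp(A/\varepsilon)}\|\bm u_t^\varepsilon(\cdot,\sigma)\|_{{}_{L^1}}\,d\sigma,
\end{equation*}
using that $\bm u^\varepsilon\in C([0,\infty);L^1)$ (which follows from the well-posedness in the energy space discussed in Appendix~\ref{well-posed}, since $H^1([a,b])^m\hookrightarrow L^1$). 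Then Hölder's inequality in $x$ gives $\|\bm u_t^\varepsilon\|_{{}_{L^1}}\leq(b-a)^{1/2}\|\bm u_t^\varepsilon\|_{{}_{L^2}}$, and a second application of Hölder in $\sigma$ over the interval $[0,s\exp(A/\varepsilon)]$ yields
\begin{equation*}
	\int_0^{s\exp(A/\varepsilon)}\|\bm u_t^\varepsilon\|_{{}_{L^1}}\,d\sigma
	\leq(b-a)^{1/2}\bigl(s\exp(A/\varepsilon)\bigr)^{1/2}\biggl(\int_0^{s\exp(A/\varepsilon)}\|\bm u_t^\varepsilon\|^2_{{}_{L^2}}\,d\sigma\biggr)^{1/2}.
\end{equation*}

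The one subtlety is that Proposition~\ref{prop:L2-norm} provides the $L^2$-bound \eqref{L2-norm} only on the interval $[0,C_1\varepsilon^{-1}\exp(A/\varepsilon)]$, whose length grows faster than $s\exp(A/\varepsilon)$ by a factor $\varepsilon^{-1}$. So for $\varepsilon$ small enough (depending on $s$), namely $\varepsilon\leq C_1/s$, one has $s\exp(A/\varepsilon)\leq C_1\varepsilon^{-1}\exp(A/\varepsilon)$, and hence \eqref{L2-norm} applies on the whole interval $[0,s\exp(A/\varepsilon)]$. Plugging this in, the right-hand side above is bounded by
\begin{equation*}
	(b-a)^{1/2}\bigl(s\exp(A/\varepsilon)\bigr)^{1/2}\bigl(C_2\varepsilon\exp(-A/\varepsilon)\bigr)^{1/2}
	=\bigl[(b-a)\,s\,C_2\,\varepsilon\bigr]^{1/2},
\end{equation*}
which tends to $0$ as $\varepsilon\to0^+$. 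This bound is uniform in $t\in[0,s\exp(A/\varepsilon)]$, so in fact
\begin{equation*}
	\sup_{0\leq t\leq s\exp(A/\varepsilon)}\|\bm u^\varepsilon(\cdot,t)-\bm u_0^\varepsilon\|_{{}_{L^1}}\leq\bigl[(b-a)\,s\,C_2\,\varepsilon\bigr]^{1/2}\xrightarrow[\varepsilon\rightarrow0]{}0.
\end{equation*}

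Finally I would close the argument with the triangle inequality,
\begin{equation*}
	\sup_{0\leq t\leq s\exp(A/\varepsilon)}\|\bm u^\varepsilon(\cdot,t)-\bm v\|_{{}_{L^1}}
	\leq\sup_{0\leq t\leq s\exp(A/\varepsilon)}\|\bm u^\varepsilon(\cdot,t)-\bm u_0^\varepsilon\|_{{}_{L^1}}+\|\bm u_0^\varepsilon-\bm v\|_{{}_{L^1}},
\end{equation*}
where the first term goes to $0$ by the estimate just obtained and the second by assumption \eqref{|u_0-v|_1}. This gives \eqref{limit-sys} and completes the proof. I do not anticipate a genuine obstacle here: the theorem is essentially a repackaging of Proposition~\ref{prop:L2-norm}, and the only point requiring a moment's care is matching the time horizon $s\exp(A/\varepsilon)$ against the horizon $C_1\varepsilon^{-1}\exp(A/\varepsilon)$ on which the $L^2$-estimate is available — which is harmless since the latter is larger for small $\varepsilon$. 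One should also make sure the dependence of $\varepsilon_0$ on $s$ (through the constraint $\varepsilon\leq C_1/s$) is stated, but this does not affect the limit.
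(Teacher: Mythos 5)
Your proof is correct and follows essentially the same route as the paper: the triangle inequality splitting off $\|\bm u_0^\varepsilon-\bm v\|_{{}_{L^1}}$, the bound of $\|\bm u^\varepsilon(\cdot,t)-\bm u_0^\varepsilon\|_{{}_{L^1}}$ by the time integral of $\|\bm u_t^\varepsilon\|_{{}_{L^1}}$, H\"older in $x$ and $t$, and the application of Proposition~\ref{prop:L2-norm} after checking $s\exp(A/\varepsilon)\leq C_1\varepsilon^{-1}\exp(A/\varepsilon)$ for small $\varepsilon$. The paper's proof takes $\varepsilon$ small so that $s\leq C_1\varepsilon^{-1}$ in exactly the same way, so there is nothing to add.
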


\begin{proof}
Fix $s>0$. Triangle inequality gives
\begin{equation}\label{triangle}
	\|\bm{u}^\varepsilon(\cdot,t)-\bm{v}\|_{{}_{L^1}}\leq\|\bm{u}^\varepsilon(\cdot,t)-\bm{u}_0^\varepsilon\|_{{}_{L^1}}+\|\bm{u}_0^\varepsilon-\bm{v}\|_{{}_{L^1}},
\end{equation}
for all $t\in[0,s\exp(A/\varepsilon)]$. 
The last term of inequality \eqref{triangle} tends to $0$ by assumption \eqref{|u_0-v|_1}, for the first one we have
\begin{equation*}
	\sup_{0\leq t\leq s\exp(A/\varepsilon)}\|\bm{u}^\varepsilon(\cdot,t)-\bm{u}_0^\varepsilon\|_{{}_{L^1}}\leq\int_0^{s\exp(A/\varepsilon)}\|\bm{u}_t^\varepsilon\|_{{}_{L^1}}dt.
\end{equation*}
Taking $\varepsilon$ so small that $s\leq C_1\varepsilon^{-1}$, we can apply Proposition \ref{prop:L2-norm} and deduce that
\begin{align}
	\int_0^{s\exp(A/\varepsilon)}\|\bm{u}_t^\varepsilon\|_{{}_{L^1}}dt&\leq[s\exp(A/\varepsilon)(b-a)]^{1/2}\biggl(\int_0^{s\exp(A/\varepsilon)}\|\bm{u}_t^\varepsilon\|^2_{{}_{L^2}}dt\biggr)^{1/2} \notag\\
	&\leq[s\exp(A/\varepsilon)(b-a)]^{1/2}\bigl[C_2\varepsilon \exp(-A/\varepsilon)\bigr]^{1/2}\notag\\
	&\leq\sqrt{C_2(b-a)s\varepsilon}. \label{int-u_t<eps^1/2}
\end{align}
Combining \eqref{|u_0-v|_1}, \eqref{triangle}, \eqref{int-u_t<eps^1/2} and by passing to the limit as $\varepsilon\to0$, we obtain \eqref{limit-sys}.
\end{proof}

\section{Example of transition layer structure}\label{transitionlayer}
In this section we construct an example of functions satisfying the assumptions \eqref{|u_0-v|_1} and \eqref{energy<E[v]+exp}.
Recall that, fixed $\bm{v}:[a,b]\rightarrow\{\bm{z}_1,\dots,\bm{z}_K\}$ having exactly $N$ jumps located at $a<\gamma_1<\gamma_2<\cdots<\gamma_N<b$, 
a family of functions $\bm{u}^\varepsilon$ has a transition layer structure if
\begin{equation}\label{trans-layer}
	\lim_{\varepsilon\rightarrow 0} \|\bm{u}_0^\varepsilon-\bm{v}\|_{{}_{L^1}}=0 \qquad \textrm{and} \qquad 
	P_\varepsilon[\bm{u}^\varepsilon]\leq P_0[\bm{v}]+C\exp(-A/\varepsilon).
\end{equation}
Then, in other words, the assumption \eqref{|u_0-v|_1} and \eqref{energy<E[v]+exp} are equivalent to
$\bm{u}_0^\varepsilon$ has a transition layer structure and the $L^2-$norm of $\bm{u}_1^\varepsilon$ is exponentially small.
Indeed, applying Proposition \ref{prop:lower} on $\bm{u}_0^\varepsilon$, one obtains for $\varepsilon$ sufficiently small
\begin{equation}\label{eq:ut}
	\tau\int_a^b|\bm{u}^\varepsilon_1(x)|^2dx\leq C{\varepsilon}\exp(-A/\varepsilon).
\end{equation}
Theorem \ref{main-hyp-sys}, roughly speaking, says that if $\bm{u}_0^\varepsilon$ has a transition layer structure and $\bm{u}_1^\varepsilon$ satisfies \eqref{eq:ut},
then $\bm{u}^\varepsilon(\cdot,t)$ maintains the transition layer structure for an exponentially large time. 
Moreover, the time derivative $\bm u_t$ satisfies \eqref{eq:ut} for an exponentially large time.

Let us construct a family of functions having a transition layer structure.
In the scalar case $m=1$, we can use the unique solution to the boundary value problem 
\begin{equation*}
	\varepsilon^2\Phi''+f(\Phi)=0, \qquad
	\Phi(0)=0, \qquad
	\Phi(x)\rightarrow\pm1 \quad \mbox{ as } \quad x\rightarrow\pm\infty,
\end{equation*}
and define the family $u^\varepsilon_0$ as
\begin{equation*}
	u_0^\varepsilon(x):=\Phi\bigl((x-\gamma_i)(-1)^{i+1}\bigr) \qquad \textrm{for } \, x\in[\gamma_{i-1/2},\gamma_{i+1/2}], \qquad i=1,\dots,N,
\end{equation*}
where
\begin{equation*}
	\gamma_{i+1/2}:=\frac{\gamma_i+\gamma_{i+1}}2, \qquad i=1,\dots, N-1, \qquad \gamma_{1/2}=a, \quad \gamma_{N+1/2}=b.
\end{equation*}
Note that $u_0^\varepsilon$ is a $H^1$ function with a piecewise continuous first derivative that jumps at $\gamma_{i+1/2}$ for $i=1\dots,N-1$, 
that $u^\varepsilon_0$ has a transition layer structure
and that $\Phi(x)=w(x/\varepsilon)$, where $w$ solves the Cauchy problem
\begin{equation*}
	\left\{\begin{aligned}
	& w'=\sqrt{2F(w)} \\
	& w(0)=0. 
	\end{aligned}\right. 
\end{equation*}
In the simplest example $F(w)=\frac14(w^2-1)^2$, we have $w(x)=\tanh(x/\sqrt2)$.

For $m>1$, we focus the attention on a fixed transition point $\gamma_i$ and we use again the notation 
$\bm{v}_i^+:=\bm{v}(\gamma_i+r)$ and $\bm{v}_i^-:=\bm{v}(\gamma_i-r)$. 
To construct a family $\bm{u}_0^\varepsilon$ having a transition layer structure, we use the following result of Grant \cite{Grant}.

\begin{lem}\label{lem:path}
Let $F:\mathbb R^m\rightarrow\mathbb R$ be a function satisfying \eqref{F1}-\eqref{F2}. 
Then, for any two zeros $\bm{z}_i$, $\bm{z}_j$ of $F$, there is a Lipschitz continuous path $\bm{\psi}_{ij}$ from $\bm{z}_i$ to $\bm{z}_j$, 
parametrized by a multiple of Euclidean arclength, such that $\phi(\bm{z}_i,\bm{z}_j)= J[\bm{\psi}_{ij}]$.
Moreover, there exists a constant $c>0$ such that 
\begin{equation*}
	\begin{aligned}
	& |\bm{\psi}_{ij}(w)-\bm{z}_i|\geq c(w-a) \qquad\qquad \textrm{for} \quad  w\approx a, \\
	& |\bm{\psi}_{ij}(w)-\bm{z}_j|\geq c(b-w) \qquad\qquad \textrm{for} \quad  w\approx b.
	\end{aligned}
\end{equation*}
\end{lem}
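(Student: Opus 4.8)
The plan is to establish Lemma~\ref{lem:path} as a variational existence result: we seek a minimizer of the length functional $J$ among all absolutely continuous paths joining $\bm{z}_i$ to $\bm{z}_j$, verify that it can be reparametrized by a constant multiple of Euclidean arclength, and then extract from the nondegeneracy hypothesis \eqref{F2} the quantitative lower bounds near the endpoints. First I would recall that $\phi(\bm{z}_i,\bm{z}_j)=\inf\{J[\bm{z}]\}$ is by definition the infimum, and that $\phi$ is a genuine metric on $\R^m$ (stated in the excerpt), so in particular $\phi(\bm{z}_i,\bm{z}_j)>0$. The functional $J[\bm{z}]=\sqrt2\int_a^b\sqrt{F(\bm{z}(s))}\,|\bm{z}'(s)|\,ds$ is invariant under reparametrization, so we may work on a fixed interval (say $[a,b]$) and, along a minimizing sequence, reparametrize so that each $\bm{z}_n$ has $|\bm{z}_n'|$ constant equal to its Euclidean length. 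Because $\phi$ is finite, the lengths of the $\bm{z}_n$ stay bounded (if a path were very long it would have to leave a large ball where $F$ has a positive lower bound, forcing $J$ to be large — here one uses that $F>0$ away from the finitely many zeros and $F\to\infty$ coercivity-type control, or more carefully that the minimizing sequence may be truncated to stay in a fixed compact set); hence $\{\bm{z}_n\}$ is equi-Lipschitz, and by Arzel\`a--Ascoli a subsequence converges uniformly to a Lipschitz limit $\bm{\psi}_{ij}$. Lower semicontinuity of $J$ with respect to uniform convergence of equi-Lipschitz paths (the integrand $\sqrt{2F}$ is continuous and $|\bm{z}_n'|\rightharpoonup|\bm{\psi}_{ij}'|$ weakly-$*$, with the standard convexity argument for the speed) gives $J[\bm{\psi}_{ij}]\le\liminf J[\bm{z}_n]=\phi(\bm{z}_i,\bm{z}_j)$, so $\bm{\psi}_{ij}$ is a minimizer; after a final reparametrization it is parametrized by a multiple of Euclidean arclength.

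Next I would address the endpoint bounds $|\bm{\psi}_{ij}(w)-\bm{z}_i|\ge c(w-a)$ for $w\approx a$ and the symmetric one at $b$. The point is that near $\bm{z}_i$, hypothesis \eqref{F2} together with Taylor expansion gives $\sqrt{F(\bm{u})}\ge c_1|\bm{u}-\bm{z}_i|$ for $\bm{u}$ in a small ball $B(\bm{z}_i,\rho)$, hence on that portion of the path the metric $\phi$ is comparable to a weighted length with weight vanishing linearly — this is essentially the distance function for a metric degenerating linearly at a point, which behaves like $\tfrac{c_1}{\sqrt2}\cdot$ (something logarithmic is \emph{not} the issue here since we want a \emph{lower} bound on $|\bm{\psi}_{ij}-\bm{z}_i|$, i.e. that the minimizer leaves the neighborhood of $\bm{z}_i$ at a definite rate). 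Concretely: a minimizing path cannot dawdle arbitrarily close to $\bm{z}_i$, because sitting near $\bm{z}_i$ costs $\phi$-length without making progress; more precisely, one compares $\bm{\psi}_{ij}$ restricted to the first time it reaches $\partial B(\bm{z}_i,\rho)$ with the radial segment, uses the Euler--Lagrange equation $\varepsilon$-free version (the geodesic equation for the conformal metric $2F(\bm{u})\,|d\bm{u}|^2$), and concludes that along a minimizer the function $w\mapsto|\bm{\psi}_{ij}(w)-\bm{z}_i|$ grows at least linearly until it exits $B(\bm{z}_i,\rho)$; the constant $c$ is then controlled below by $c_1$ and the arclength normalization. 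I would invoke the monotonicity/ODE comparison exactly as in the quadratic estimate used in the proof of Proposition~\ref{prop:lower} (the function $\psi(x)=|\bm{z}(x)-\bm{v}_i^+|^2$ argument there), adapted to the geodesic rather than the $\varepsilon$-scaled equation.

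Since this is a known result of Grant, the cleanest route is to cite \cite{Grant} for the existence and arclength parametrization of $\bm{\psi}_{ij}$ (which is purely a statement about the Riemannian-type metric $\phi$ on $\R^m$ and does not involve $\varepsilon$ or the PDE at all), and to supply only a short proof of the endpoint growth estimates, since those are what we will actually use when gluing the pieces $\bm{\psi}_{ij}(x/\varepsilon)$ together to build $\bm{u}_0^\varepsilon$ and to control the $L^1$-distance to $\bm{v}$ near the jump points. The main obstacle is the regularity and nondegeneracy of the minimizing geodesic: a priori the integrand $\sqrt{2F}$ is only $C^{1}$-ish (since $F\in C^3$ but $\sqrt{F}$ loses a derivative at the zeros of $F$), so one must be slightly careful that the minimizer does not pass through another zero $\bm{z}_l$ of $F$ en route — but this is ruled out by the strict triangle inequality for the metric $\phi$ together with $\phi(\bm{z}_i,\bm{z}_l)+\phi(\bm{z}_l,\bm{z}_j)>\phi(\bm{z}_i,\bm{z}_j)$ whenever $\bm{z}_l\ne\bm{z}_i,\bm{z}_j$ is not on a minimizing path, and away from the $\bm{z}_l$ the integrand is smooth and the Euler--Lagrange / geodesic theory applies. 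Granting that, the endpoint estimates follow from the linear lower bound $\sqrt{F(\bm{u})}\gtrsim|\bm{u}-\bm{z}_i|$ near $\bm{z}_i$ by the comparison argument indicated above.
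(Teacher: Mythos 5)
The paper does not prove Lemma \ref{lem:path} at all: it states the result and refers to \cite[Lemma 3.2]{Grant} for the proof, exactly as you suggest doing in your final paragraph. So your bottom line coincides with the paper's treatment, and the interest of your proposal lies in the self-contained sketch you add on top. That sketch follows the standard direct method and is the right outline, but two of its steps are genuinely soft and would need repair if you wanted to avoid the citation. First, the boundedness of the Euclidean lengths along the minimizing sequence does not follow from the reason you give: a near-minimizing path need not ``leave a large ball'', and the real danger is the opposite one, namely that a path can accumulate unbounded Euclidean length in ever smaller loops near a zero $\bm{z}_l$ of $F$ at negligible $J$-cost, because the weight $\sqrt{2F}$ vanishes there (loops of radius $r_n=1/n$ cost $O(r_n^2)$ each, so the total cost converges while the total length diverges). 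One must excise such excursions, or invoke the existence theory for degenerate conformal metrics as in \cite{Sternberg} and \cite{Grant}. Second, the endpoint estimate $|\bm{\psi}_{ij}(w)-\bm{z}_i|\geq c(w-a)$ is the genuinely delicate part of the lemma (it is precisely what feeds the exponential decay \eqref{w-a} in Section \ref{transitionlayer}), and the comparison argument you propose to import from Proposition \ref{prop:lower} does not transfer directly: that argument concerns the $\varepsilon$-scaled Euler--Lagrange equation $\bm{z}''=\varepsilon^{-2}\nabla F(\bm{z})$ with a Neumann end condition and yields exponential decay \emph{toward} the equilibrium, whereas here one needs a lower bound on the rate at which an arclength-parametrized geodesic of the degenerate metric $2F(\bm{u})\,|d\bm{u}|^2$ \emph{escapes} from $\bm{z}_i$; the two-sided comparison $\phi(\bm{z}_i,\bm{u})\asymp|\bm{u}-\bm{z}_i|^2$ near $\bm{z}_i$ (from \eqref{F-quadratic}) is the right tool, but the linear-growth conclusion still requires the argument of \cite{Grant}. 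Finally, your appeal to the ``strict triangle inequality'' to rule out passage through an intermediate zero $\bm{z}_l$ is circular as phrased (it assumes $\bm{z}_l$ is not on a minimizing path to conclude it is not on the minimizing path); the issue is harmless --- one can split the path at any intermediate zero and argue on each piece, and indeed the paper simply assumes this without loss of generality in Section \ref{transitionlayer} --- but it should not be presented as a proof.
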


For the proof of this result see \cite[Lemma 3.2]{Grant}.

Denote by $\bm\psi_i:[a,b]\rightarrow\mathbb R^m$ the optimal path from $\bm{v}_i^-$ to $\bm{v}_i^+$ as described in Lemma \ref{lem:path}
and let $\sigma_i$ be the Euclidean arclength of $\bm\psi_i$, that is $|\bm\psi'_i(x)|=\sigma_i$ for all $x\in[a,b]$.
Assume, without loss of generality, that the path do not pass through any zero of $F$ (except at the endpoints of the path) and consider the solution of the Cauchy problem
\begin{equation}\label{ODE}
	\left\{\begin{aligned}
	& w'=\sigma_i^{-1}\sqrt{2F(\bm\psi_i(w))} \\
	& w(0)=\frac{b-a}2. 
	\end{aligned}\right. 
\end{equation}
There exists a unique $C^1$ solution  $w:\mathbb R\rightarrow(a,b)$ of \eqref{ODE}, because $\sqrt F$ and $\bm\psi_i$ are Lipschitz continuous, 
and $F$ satisfies \eqref{F-quadratic}. 
Indeed, 
\begin{equation*}
	\begin{aligned}
	& \sqrt{F(\bm\psi_i(w))}\leq\sigma_i\sqrt{\Lambda}|w-a| \qquad\qquad \textrm{for} \quad  w\approx a, \\
	& \sqrt{F(\bm\psi_i(w))}\leq\sigma_i\sqrt{\Lambda}|w-b| \qquad\qquad \textrm{for} \quad  w\approx b.
	\end{aligned}
\end{equation*}
Then, we deduce that
\begin{equation*}
	\lim_{x\to-\infty}w(x)=a \qquad  \textrm{and} \qquad  \lim_{x\to+\infty}w(x)=b. 
\end{equation*}
Now, we define $\bm{u}_0^\varepsilon:=\bm{v}$ outside of $\bigcup_{i=1}^N B(\gamma_i,r)$ and in $B(\gamma_i,r)$ we use the solution of \eqref{ODE}.
In order to construct a continuous function, let us define 
\begin{equation}\label{u0eps1}
	\bm{u}_0^\varepsilon(x):=\bm\psi_i\bigl(w((x-\gamma_i)/\varepsilon)\bigr) \qquad  \textrm{for } \, x\in[\gamma_i-r+\varepsilon,\gamma_i+r-\varepsilon],
\end{equation}
and use a line segment to connect $\bm\psi_i\bigl(w(1-r/\varepsilon)\bigr)$ with $\bm{v}_i^-$ and $\bm\psi_i\bigl(w(r/\varepsilon-1)\bigr)$ with $\bm{v}_i^+$.
Hence, we have
\begin{equation}\label{u0eps2}
	\bm{u}_0^\varepsilon(x):=\left\{
	\begin{aligned}
	\bm{v}_i^-+\frac{x-\gamma_i+r}{\varepsilon}\left(\bm\psi_i\bigl(w(1-r/\varepsilon)\bigr)-\bm{v}_i^-\right), \quad  x\in(\gamma_i-r,\gamma_i-r+\varepsilon),\\
	\bm{v}_i^++\frac{\gamma_i+r-x}{\varepsilon}\left(\bm\psi_i\bigl(w(r/\varepsilon-1)\bigr)-\bm{v}_i^+\right), \quad  x\in(\gamma_i+r-\varepsilon,\gamma_i+r). 
	\end{aligned}\right.
\end{equation}
By joining \eqref{u0eps1} and \eqref{u0eps2}, we conclude the definition of $\bm{u}_0^\varepsilon$ in $B(\gamma_i,r)$.
Note that $\bm{u}_0^\varepsilon$ is a piecewise continuously differentiable function and, for \eqref{u0eps1} one has 
\begin{equation*}
	|(\bm{u}_0^\varepsilon)'(x)|=\frac{\sigma_i}\varepsilon|w'((x-\gamma_i)/\varepsilon)| \qquad  \textrm{for } \,[\gamma_i-r+\varepsilon,\gamma_i+r-\varepsilon].
\end{equation*}
Using this equality and \eqref{ODE}, we deduce
\begin{equation}\label{eq:F(u0)}
	\frac12\varepsilon^2|(\bm{u}_0^\varepsilon)'|^2=F(\bm{u}_0^\varepsilon) \qquad\quad \textrm{in } [\gamma_i-r+\varepsilon,\gamma_i+r-\varepsilon].
\end{equation}
Now, let us show that the family of functions $\bm{u}_0^\varepsilon$ has a transition layer structure, i.e. $\bm{u}_0^\varepsilon$ satisfies \eqref{trans-layer}.  
The $L^1$ requirement follows from the dominated convergence theorem. Let us prove the energy requirement. 

\begin{prop}
Assume that $F:\mathbb R^m\rightarrow\mathbb R$ satisfies \eqref{F1}-\eqref{F2}.
Let $\bm{v}:[a,b]\rightarrow\{\bm{z}_1,\dots,\bm{z}_K\}$ be a function having exactly $N$ jumps located at $a<\gamma_1<\gamma_2<\cdots<\gamma_N<b$
and let $\bm{u}_0^\varepsilon$ be a function such that $\bm{u}_0^\varepsilon:=\bm{v}$ outside of $\bigcup_{i=1}^N B(\gamma_i,r)$ 
and $\bm{u}_0^\varepsilon$ satisfies \eqref{u0eps1}, \eqref{u0eps2} in $B(\gamma_i,r)$. 
For all $A\in\big(0,c\sigma^{-1}r\sqrt{2\lambda}\big)$ (where $c$ is the constant introduced in Lemma \ref{lem:path} and $\displaystyle\sigma:=\max_i \sigma_i$), 
there exist constants $\varepsilon_0, C>0$ such that, if $\varepsilon\in(0,\varepsilon_0)$, then  
\begin{equation}\label{u0-energy}
	P_\varepsilon[\bm{u}_0^\varepsilon]\leq P_0[\bm{v}]+C\exp(-A/\varepsilon).
\end{equation}
\end{prop}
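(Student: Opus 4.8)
The plan is to localize $P_\varepsilon[\bm u_0^\varepsilon]$ to the three pieces making up each ball $B(\gamma_i,r)$ and to exploit near-equipartition of energy on the core together with the exponentially fast approach of the optimal path $\bm\psi_i$ to the equilibria $\bm v_i^\pm$ on the two thin boundary layers. Since $\bm u_0^\varepsilon$ coincides with the locally constant function $\bm v$ outside $\bigcup_{i=1}^N B(\gamma_i,r)$, there $F(\bm u_0^\varepsilon)=0$ and $(\bm u_0^\varepsilon)'=0$, so $P_\varepsilon[\bm u_0^\varepsilon]=\sum_{i=1}^N P_\varepsilon[\bm u_0^\varepsilon;\gamma_i-r,\gamma_i+r]$. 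For each $i$ I would split $P_\varepsilon[\bm u_0^\varepsilon;\gamma_i-r,\gamma_i+r]$ into the contribution of the \emph{core} $[\gamma_i-r+\varepsilon,\gamma_i+r-\varepsilon]$, where $\bm u_0^\varepsilon$ is given by \eqref{u0eps1}, and of the two boundary layers of width $\varepsilon$ on which $\bm u_0^\varepsilon$ is the affine interpolant \eqref{u0eps2}.

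On the core, the equipartition identity \eqref{eq:F(u0)} reduces the energy density $\frac\varepsilon2|(\bm u_0^\varepsilon)'|^2+\frac{F(\bm u_0^\varepsilon)}\varepsilon$ to $\sqrt{2F(\bm u_0^\varepsilon)}\,|(\bm u_0^\varepsilon)'|$. Applying the change of variables $y=w((x-\gamma_i)/\varepsilon)$ and using that $\bm\psi_i$ is parametrized so that $|\bm\psi_i'|\equiv\sigma_i$, this gives
\[
P_\varepsilon[\bm u_0^\varepsilon;\gamma_i-r+\varepsilon,\gamma_i+r-\varepsilon]=\sqrt2\int_{w(1-r/\varepsilon)}^{w(r/\varepsilon-1)}\sqrt{F(\bm\psi_i(y))}\,|\bm\psi_i'(y)|\,dy\le J[\bm\psi_i]=\phi(\bm v_i^-,\bm v_i^+),
\]
the inequality because $F\ge0$ (so restricting the integral to a subinterval of $[a,b]$ only decreases it) and the last equality because $\bm\psi_i$ is the optimal path of Lemma \ref{lem:path}. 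Hence the cores together contribute at most $\sum_i\phi(\bm v_i^-,\bm v_i^+)=P_0[\bm v]$, with no exponential error.

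The remaining task — the technical heart of the proof — is to bound each boundary layer by $Ce^{-A/\varepsilon}$. I would first establish the exponential estimate $|w(1-r/\varepsilon)-a|\le Ce^{-\kappa_i r/\varepsilon}$ with $\kappa_i=c\sqrt\lambda/(\sqrt2\,\sigma_i)$. This follows from \eqref{ODE}: for $w$ near $a$, the lower bound $|\bm\psi_i(w)-\bm z_i|\ge c(w-a)$ of Lemma \ref{lem:path} combined with the quadratic lower bound $F(\bm\psi_i(w))\ge\frac14\lambda|\bm\psi_i(w)-\bm z_i|^2$ from \eqref{F-quadratic} yields the differential inequality $w'(x)\ge\kappa_i(w(x)-a)$; integrating it over the portion of the real line $x\le x_0$ on which it is valid (using $w(x)\to a$ as $x\to-\infty$) gives the claimed bound, for $\varepsilon$ small enough that $1-r/\varepsilon\le x_0$. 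Since $\bm\psi_i(a)=\bm v_i^-$ and $\bm\psi_i$ is $\sigma_i$-Lipschitz, on the left layer $\bm u_0^\varepsilon$ stays within $\sigma_i C e^{-\kappa_i r/\varepsilon}$ of $\bm v_i^-$ and its (constant) derivative has modulus $\le\sigma_i C e^{-\kappa_i r/\varepsilon}/\varepsilon$; integrating $\frac\varepsilon2|(\bm u_0^\varepsilon)'|^2$ over an interval of length $\varepsilon$, and bounding $F(\bm u_0^\varepsilon)\le\Lambda|\bm u_0^\varepsilon-\bm v_i^-|^2$ via \eqref{F-quadratic}, both pieces are $O(e^{-2\kappa_i r/\varepsilon})$. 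Because $2\kappa_i r=cr\sqrt{2\lambda}/\sigma_i\ge c\sigma^{-1}r\sqrt{2\lambda}>A$, we get $e^{-2\kappa_i r/\varepsilon}\le e^{-A/\varepsilon}$, so the layer contributes $\le Ce^{-A/\varepsilon}$; the right layer near $b$ is identical, using the companion bound $|\bm\psi_i(w)-\bm z_j|\ge c(b-w)$. Summing the $N$ core estimates and the $2N$ layer estimates yields \eqref{u0-energy}.

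I expect the main obstacle to be the comparison argument for $w(1-r/\varepsilon)-a$: one must verify the differential inequality on the whole portion of the line where it is used, not merely in an infinitesimal neighborhood of $a$, and keep careful track of the constant in the exponent, since the resulting decay rate $cr\sqrt{2\lambda}/\sigma_i$ must stay above the admissible threshold $c\sigma^{-1}r\sqrt{2\lambda}$ — which is precisely the role of the hypothesis $A<c\sigma^{-1}r\sqrt{2\lambda}$. Everything else (the change of variables on the core, the quadratic control of $F$ on the layers) is routine once this estimate is in hand.
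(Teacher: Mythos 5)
Your proposal is correct and follows essentially the same route as the paper: the same decomposition of $P_\varepsilon[\bm{u}_0^\varepsilon;\gamma_i-r,\gamma_i+r]$ into the core plus two $\varepsilon$-wide layers, the same equipartition-and-change-of-variables bound of the core by $\phi(\bm{v}_i^-,\bm{v}_i^+)$, and the same exponential decay estimate $w(x)-a\leq C\exp(c_1x)$ obtained from the differential inequality $w'\geq \mathrm{const}\cdot(w-a)$ via Lemma \ref{lem:path} and \eqref{F-quadratic}. The only cosmetic difference is that you track the per-layer rate $\kappa_i=c\sqrt\lambda/(\sqrt2\,\sigma_i)$ before passing to the worst case $\sigma=\max_i\sigma_i$, whereas the paper inserts $\sigma$ immediately; both give $2\kappa_i r\geq c\sigma^{-1}r\sqrt{2\lambda}>A$.
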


\begin{proof}
By definition, we have 
\begin{equation*}
	P_\varepsilon[\bm{u}_0^\varepsilon]=\sum_{i=1}^NP_\varepsilon[\bm{u}_0^\varepsilon;\gamma_i-r,\gamma_i+r].
\end{equation*}
Then, we must estimate the energy functional in $B(\gamma_i,r)$. 
For definitions \eqref{u0eps1} and \eqref{u0eps2}, we split
\begin{equation*}
	P_\varepsilon[\bm{u}_0^\varepsilon;\gamma_i-r,\gamma_i+r]:=I_1+I_2+I_3,
\end{equation*}
where 
\begin{align*}
	I_1&:=\int_{\gamma_i-r}^{\gamma_i-r+\varepsilon}\left[\frac\varepsilon2 |(\bm{u}_0^\varepsilon)'(x)|^2+\frac{F(\bm{u}_0^\varepsilon(x))}\varepsilon\right]dx,\\
	I_2&:=\int_{\gamma_i-r+\varepsilon}^{\gamma_i+r-\varepsilon}\left[\frac\varepsilon2 |(\bm{u}_0^\varepsilon)'(x)|^2+\frac{F(\bm{u}_0^\varepsilon(x))}\varepsilon\right]dx,\\
	I_3&:=\int_{\gamma_i+r-\varepsilon}^{\gamma_i+r}\left[\frac\varepsilon2 |(\bm{u}_0^\varepsilon)'(x)|^2+\frac{F(\bm{u}_0^\varepsilon(x))}\varepsilon\right]dx.		
\end{align*}
To start with, we estimate the term $I_2$. By using \eqref{eq:F(u0)} and changing variable $y=w((x-\gamma_i)/\varepsilon)$, we obtain
\begin{equation*}
	I_2=\int_{\gamma_i-r+\varepsilon}^{\gamma_i+r-\varepsilon}\frac{2F(\bm{u}_0^\varepsilon(x))}\varepsilon dx
	=\sqrt2\int_{w(1-r/\varepsilon)}^{w(r/\varepsilon-1)}\sqrt{F(\bm{\psi}_i(y))}|\bm{\psi}'_i(y)|dy.  
\end{equation*}
By definition $\bm{\psi}_i$ is an optimal path from $\bm{v}_i^-$ to $\bm{v}_i^+$ and as a consequence 
\begin{equation}\label{eq:I2}
	I_2\leq\sqrt2\int_a^b\sqrt{F(\bm{\psi}_i(y))}|\bm{\psi}'_i(y)|dy=\phi(\bm{v}_i^-,\bm{v}_i^+).
\end{equation}
Next, we estimate $I_1$. 
We have
\begin{equation*}
	I_1:=\int_{-r}^{-r+\varepsilon}\biggl[\frac1{2\varepsilon}|\bm\psi_i\bigl(w(1-r/\varepsilon)\bigr)-\bm{v}_i^-|^2
	+\frac1\varepsilon F\Bigl(\bm{v}_i^-+\frac{x+r}{\varepsilon}\bigl(\bm\psi_i\bigl(w(1-r/\varepsilon)\bigr)-\bm{v}_i^-\bigr)\Bigr)\biggr]dx.
\end{equation*}
To estimate the latter term, for $\varepsilon$ sufficiently small, we use \eqref{F-quadratic} to obtain
\begin{equation*}
	F\Bigl(\bm{v}_i^-+\frac{x+r}{\varepsilon}\bigl(\bm\psi_i\bigl(w(1-r/\varepsilon)\bigr)-\bm{v}_i^-\bigr)\Bigr)\leq\Lambda|\bm\psi_i\bigl(w(1-r/\varepsilon)\bigr)-\bm{v}_i^-|^2.
\end{equation*}
Thanks to this bound and the Lipschitz continuity of $\bm{\psi}_i$, one has 
\begin{equation}\label{diseq:I1}
	I_1\leq C|w(1-r/\varepsilon)-a|^2.
\end{equation}
Here and in what follows, $C$ is a positive constant (independent on $\varepsilon$) whose value may change from line to line.
In order to estimate the right hand side of \eqref{diseq:I1}, let us use Lemma \ref{lem:path} and \eqref{F-quadratic}.
Since $w(x)\to a$ as $x\to-\infty$ and $\bm{\psi}_i(a)=\bm{v}_i^-$, there exists $x_1>0$ sufficiently large so that
\begin{equation*}
	w'(x)\geq(\sigma_i\sqrt2)^{-1}\sqrt{\lambda}|\bm{\psi}_i(w(x))-\bm{v}_i^-|\geq c(\sigma\sqrt2)^{-1}\sqrt{\lambda}(w(x)-a),
\end{equation*}
for all $x\leq-x_1$, where $c>0$ is the constant introduced in Lemma \ref{lem:path}. 
Using the notation $c_1:=c(\sigma\sqrt2)^{-1}\sqrt{\lambda}$ and multiplying by $\exp(-c_1x)$, one has
\begin{equation*}
	\bigl(\exp(-c_1x)w(x)\bigr)'\geq-ac_1(\exp(-c_1x),
\end{equation*}
for all $x\leq-x_1$.
By integrating the latter inequality, we infer
\begin{equation}\label{w-a}
	w(x)-a\leq C\exp(c_1x),
\end{equation}
for all $x\leq-x_1$. 
If $\varepsilon$ is so small that $1-r/\varepsilon\leq-x_1$, by substituting \eqref{w-a} into \eqref{diseq:I1}, we obtain
\begin{equation}\label{eq:I1}
	I_1\leq C\exp(2c_1(1-r/\varepsilon))\leq C\exp(-2c_1r/\varepsilon)\leq C\exp(-A/\varepsilon),
\end{equation}
for all positive constant $A\leq2c_1r\leq c\sigma^{-1}r\sqrt{2\lambda}$.
In a similar way, we can obtain the estimate for $I_3$.
For all $A\in\big(0,c\sigma^{-1}r\sqrt{2\lambda}\big)$, we have
\begin{equation}\label{eq:I3}
	I_3\leq C|w(r/\varepsilon-1)-b|^2\leq C\exp(-A/\varepsilon).
\end{equation}
Combining \eqref{eq:I2}, \eqref{eq:I1} and \eqref{eq:I3}, we deduce
\begin{equation*}
	P_\varepsilon[\bm{u}_0^\varepsilon;\gamma_i-r,\gamma_i+r]\leq\phi(\bm{v}_i^-,\bm{v}_i^+)+C\exp(-A/\varepsilon),
\end{equation*}
and as a trivial consequence we have \eqref{u0-energy}.
\end{proof}
Then, we can conclude that $\bm{u}_0^\varepsilon$ has a transition layer structure and if
the $L^2$--norm of $\bm u^\varepsilon_1$ is exponentially small (see \eqref{eq:ut}) the solution of \eqref{hyp-al-ca-sys}-\eqref{Neumann-sys}-\eqref{cond-iniz-sys}
evolves very slow and maintains the same transition layer structure of the initial datum $\bm{u}_0^\varepsilon$ for an exponentially long time.

\section{Layer dynamics}\label{layerdin}
In this section we study the motion of the transition layers and 
we show that Theorem~\ref{main-hyp-sys} implies that the movement of the layers is extremely slow.
To do this, we adapt the strategy already used in \cite{Grant,Folino}.
Before stating the main result of the section, we need some definitions. 
If $\bm{v}:[a,b]\to\mathbb R^m$ is a step function with jumps at $\gamma_1,\gamma_2,\ldots,\gamma_N$, then its {\it interface} $I[\bm{v}]$ is defined by 
\begin{equation*}
	I[\bm{v}]:=\{\gamma_1,\gamma_2,\ldots,\gamma_N\}.
\end{equation*}
For an arbitrary function $\bm{u}:[a,b]\rightarrow\mathbb{R}^m$ and an arbitrary closed subset $D\subset\mathbb R^m\backslash F^{-1}(\{0\})$,
the {\it interface} $I_D[\bm{u}]$ is defined by
\begin{equation*}
	I_D[\bm{u}]:=\bm{u}^{-1}(D).
\end{equation*}
Finally, for any $A,B\subset\mathbb{R}$ the {\it Hausdorff distance} $d(A,B)$ between $A$ and $B$ is defined by 
\begin{equation*}
	d(A,B):=\max\biggl\{\sup_{\alpha\in A}d(\alpha,B),\,\sup_{\beta\in B}d(\beta,A)\biggr\},
\end{equation*}
where $d(\beta,A):=\inf\{|\beta-\alpha|: \alpha\in A\}$. 

Now we can state the main result of this section.
\begin{thm}\label{thm:interface}
Assume that $G$ satisfies \eqref{g-pos-def} and that $\bm{f}=-\nabla F$ with $F$ satisfying \eqref{F1}-\eqref{F2}. 
Let $\bm{u}^\varepsilon$ be solution of \eqref{hyp-al-ca-sys}-\eqref{Neumann-sys}-\eqref{cond-iniz-sys} 
with initial data $\bm{u}_0^{\varepsilon}$, $\bm{u}_1^{\varepsilon}$ satisfying \eqref{|u_0-v|_1} and \eqref{energy<E[v]+exp}.
Given $\delta_1\in(0,r)$ and a closed subset $D\subset\mathbb R^m\backslash F^{-1}(\{0\})$, set
\begin{equation*}
	T_\varepsilon(\delta_1)=\inf\{t:\; d(I_D[\bm u^\varepsilon(\cdot,t)],I_D[\bm{u}_0^\varepsilon])>\delta_1\}.
\end{equation*}
There exists $\varepsilon_0>0$ such that if $\varepsilon\in(0,\varepsilon_0)$ then
\begin{equation}\label{T-interface}
	T_\varepsilon(\delta_1)> \exp(A/\varepsilon).
\end{equation}
\end{thm}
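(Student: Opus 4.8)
The plan is to argue by contradiction, exploiting the $L^1$-continuity of $\bm{u}^\varepsilon$ in time together with Theorem~\ref{main-hyp-sys}. Suppose, for $\varepsilon$ arbitrarily small, that $T_\varepsilon(\delta_1)\leq\exp(A/\varepsilon)$. By definition of $T_\varepsilon(\delta_1)$ and the continuity of $t\mapsto I_D[\bm{u}^\varepsilon(\cdot,t)]$ in the Hausdorff distance, at $t=T_\varepsilon(\delta_1)$ we have $d(I_D[\bm{u}^\varepsilon(\cdot,T_\varepsilon(\delta_1))],I_D[\bm{u}_0^\varepsilon])=\delta_1$. The key point is to turn this statement about interfaces into a lower bound on $\|\bm{u}^\varepsilon(\cdot,T_\varepsilon(\delta_1))-\bm{v}\|_{L^1}$ that is bounded below by a positive constant independent of $\varepsilon$; this contradicts \eqref{limit-sys} of Theorem~\ref{main-hyp-sys} applied with, say, $s=1$.

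**Key steps.** First I would record that, since $\bm{u}_0^\varepsilon$ has a transition layer structure, for $\varepsilon$ small the interface $I_D[\bm{u}_0^\varepsilon]$ consists of exactly $N$ points, one in each ball $B(\gamma_i,r)$, lying within a distance $o(1)$ of $\gamma_i$ (this uses \eqref{|u_0-v|_1} and the fact that $D$ is a compact subset of $\R^m\setminus F^{-1}(\{0\})$, so that $\bm{u}_0^\varepsilon$ must cross $D$ near each jump of $\bm{v}$ and cannot do so where it is close to a zero of $F$). Second, I would choose $\varepsilon_0$ small enough that for $\varepsilon\in(0,\varepsilon_0)$ one has $\|\bm{u}_0^\varepsilon-\bm{v}\|_{L^1}$ and the displacement of $I_D[\bm{u}_0^\varepsilon]$ from $\{\gamma_1,\ldots,\gamma_N\}$ both smaller than a fixed fraction of $\delta_1$, and also small enough that $s=1\leq C_1\varepsilon^{-1}$ so Proposition~\ref{prop:L2-norm} applies. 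Third, the geometric heart: if $d(I_D[\bm{u}^\varepsilon(\cdot,T_\varepsilon(\delta_1))],I_D[\bm{u}_0^\varepsilon])=\delta_1$, then either some layer has moved by at least $\delta_1$ from its original position, or a layer has been created or annihilated; in either case there is an interval $J$ of length bounded below by a constant multiple of $\delta_1$ on which $\bm{u}^\varepsilon(\cdot,T_\varepsilon(\delta_1))$ and $\bm{v}$ take values near \emph{different} zeros of $F$ — so that $|\bm{u}^\varepsilon(\cdot,T_\varepsilon(\delta_1))-\bm{v}|$ is bounded below on $J$ by half the minimal distance between distinct zeros of $F$. Integrating gives $\|\bm{u}^\varepsilon(\cdot,T_\varepsilon(\delta_1))-\bm{v}\|_{L^1}\geq c_0'>0$ with $c_0'$ independent of $\varepsilon$. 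Finally, since $T_\varepsilon(\delta_1)\leq\exp(A/\varepsilon)$, the quantity $\|\bm{u}^\varepsilon(\cdot,T_\varepsilon(\delta_1))-\bm{v}\|_{L^1}$ is dominated by the supremum in \eqref{limit-sys}, which tends to $0$ — a contradiction for $\varepsilon$ small.

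**Main obstacle.** The delicate step is the third one: making precise, uniformly in $\varepsilon$, the implication ``Hausdorff displacement of the interface by $\delta_1$'' $\Rightarrow$ ``$L^1$-deviation from $\bm{v}$ bounded below by a constant''. One has to handle both the case where an existing layer translates (then on one side of its new position, over a length comparable to $\delta_1$, the solution sits near a zero $\bm{z}_j$ while $\bm{v}$ still equals the neighboring zero $\bm{z}_l\neq\bm{z}_j$) and the case where a new component of $\bm{u}^{-1}(D)$ appears far from all $\gamma_i$ or an old one disappears; in the creation/annihilation case one instead uses that a neighborhood of the lost (or extra) crossing of $D$ forces $\bm{u}^\varepsilon$ to stay near a single zero over a whole interval where $\bm{v}$ itself is constant and equal to a \emph{different} zero, again over a length $\gtrsim\delta_1$ by the constraint $\delta_1<r$ and the uniform separation of the $\gamma_i$. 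I would state this as a short lemma (as in \cite{Grant,Folino}) and then combine it with the bookkeeping above. The rest is routine once the constants are tracked carefully.
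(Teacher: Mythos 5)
Your overall architecture (contradiction via Theorem \ref{main-hyp-sys}) is reasonable, but the ``geometric heart'' of your argument --- the claim that a Hausdorff displacement of $I_D[\bm{u}^\varepsilon(\cdot,t)]$ by $\delta_1$ forces an interval $J$ with $|J|\gtrsim\delta_1$ on which $\bm{u}^\varepsilon(\cdot,t)$ sits near a zero of $F$ different from the one $\bm{v}$ takes there, hence an $L^1$ deviation bounded below by a constant --- is false as stated, and this is a genuine gap. In the ``creation'' case, a new component of $(\bm{u}^\varepsilon)^{-1}(D)$ can appear as an arbitrarily thin spike: $\bm{u}^\varepsilon(\cdot,t)$ may coincide with the zero $\bm{z}_j=\bm{v}(x)$ outside an interval of width $\eta$ around some $x_0$ far from all $\gamma_i$ and dip into $D$ inside it; this displaces $I_D$ by more than $\delta_1$ in Hausdorff distance while contributing only $O(\eta)$ to $\|\bm{u}^\varepsilon(\cdot,t)-\bm{v}\|_{L^1}$, so no $\varepsilon$-independent lower bound $c_0'$ exists and no contradiction with \eqref{limit-sys} arises. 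A similar objection applies to your annihilation case: avoiding the closed set $D$ near $\gamma_i$ does not force $\bm{u}^\varepsilon$ to remain near a single zero there. The $L^1$ estimate of Theorem \ref{main-hyp-sys} alone cannot control the interface.

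What rules out such thin excursions is the \emph{energy}, which your proposal never exploits beyond Proposition \ref{prop:L2-norm}. The paper's proof goes through a purely variational lemma (Lemma \ref{lem:interface}): if $\|\bm{u}-\bm{v}\|_{L^1}<\tfrac12\rho\,\delta_1$ \emph{and} $P_\varepsilon[\bm{u}]\leq P_0[\bm{v}]+2N\sup\{\phi(\bm{z}_j,\bm{\xi})\}$, then $d(I_D[\bm{u}],I[\bm{v}])<\tfrac12\delta_1$. The point is that any excursion from a $\rho$-neighborhood of a zero of $F$ into the compact set $D\subset\R^m\setminus F^{-1}(\{0\})$ costs at least $\inf\phi(\bm{\xi}_1,\bm{\xi}_2)>0$ of energy, by the pointwise bound $P_\varepsilon[\bm{u};c,d]\geq\phi(\bm{u}(c),\bm{u}(d))$, \emph{independently of the width of the excursion in $x$}; since the budget $P_0[\bm{v}]+2N\sup\phi$ is already nearly exhausted by the $N$ genuine transitions (located via the $L^1$ hypothesis), no extra crossing of $D$ can occur. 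The lemma is then applied at each $t\leq\exp(A/\varepsilon)$ using the monotonicity of $E_\varepsilon$ (so the energy hypothesis persists in time) together with the $L^1$ control from Theorem \ref{main-hyp-sys}, and $d(I_D[\bm{u}^\varepsilon(\cdot,t)],I_D[\bm{u}_0^\varepsilon])<\delta_1$ follows from the triangle inequality through $I[\bm{v}]$ (which also spares you the continuity-in-$t$ of the interface that your contradiction setup implicitly assumes). To repair your proof you would need to prove exactly such an energy-based lemma; the $L^1$ bookkeeping you describe cannot replace it.
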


In order to prove Theorem \ref{thm:interface}, we use the following result, 
that is, as Proposition \ref{prop:lower}, purely variational in character and concerns only the functional $P_\varepsilon$.

\begin{lem}\label{lem:interface}
Assume that $F:\mathbb R^m\rightarrow\mathbb R$ satisfies \eqref{F1}-\eqref{F2}. 
Let $\bm{v}:[a,b]\rightarrow\{\bm{z}_1,\dots,\bm{z}_K\}$ be a function having exactly $N$ jumps located at $a<\gamma_1<\gamma_2<\cdots<\gamma_N<b$.
Given $\delta_1\in(0,r)$ and a closed subset $D\subset\mathbb R^m\backslash F^{-1}(\{0\})$, 
there exist $\varepsilon_0,\rho>0$ such that for all functions $\bm{u}^\varepsilon:[a,b]\to\mathbb R^m$ satisfying
\begin{equation}\label{eq:u-v}
	\|\bm{u}^\varepsilon-\bm{v}\|_{{}_{L^1}}<\tfrac12\rho\,\delta_1
\end{equation}
and 
\begin{equation}\label{eq:E[u]}
	P_\varepsilon[\bm{u}^\varepsilon]\leq P_0[\bm{v}]+ 2N\sup\{\phi(\bm{z}_j,\bm{\xi}) : \bm{z}_j\in F^{-1}(\{0\}), \bm{\xi}\in B(\bm{z}_j,\rho)\},
\end{equation}
for all $\varepsilon\in(0,\varepsilon_0)$, we have
\begin{equation}\label{lem:d-interfaces}
	d(I_D[\bm{u}^\varepsilon], I[\bm{v}])<\tfrac12\delta_1.
\end{equation}
\end{lem}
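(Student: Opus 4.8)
The plan is to unfold the Hausdorff bound \eqref{lem:d-interfaces} into its two one-sided halves and prove each separately: (i) $I_D[\bm{u}^\varepsilon]\subset\bigcup_{i}B(\gamma_i,\tfrac12\delta_1)$, i.e.\ every point where $\bm{u}^\varepsilon$ meets $D$ lies within $\tfrac12\delta_1$ of some $\gamma_i$; and (ii) every $\gamma_i$ lies within $\tfrac12\delta_1$ of $I_D[\bm{u}^\varepsilon]$. Before starting I would fix two quantities depending only on $F$ and $D$: the positive number $\eta:=\min_{1\le j\le K}\inf\{\phi(\bm{z}_j,\bm{\xi}):\bm{\xi}\in D\}$ (its positivity uses that $\phi$ is a metric, that $D$ is closed, and that $D$ misses the finitely many zeros of $F$), and the quantity $S_\rho:=\sup\{\phi(\bm{z}_j,\bm{\xi}):1\le j\le K,\ \bm{\xi}\in B(\bm{z}_j,\rho)\}$, which by the quadratic estimate \eqref{F-quadratic} for $F$ near its zeros satisfies $S_\rho\le C\rho^2\to0$ as $\rho\to0^+$. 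Since the right-hand side of \eqref{eq:E[u]} is exactly $P_0[\bm{v}]+2NS_\rho$, I would pick $\rho$ — hence $\varepsilon_0$ — so small that $(4N+1)S_\rho<\eta$. The only facts about $P_\varepsilon$ used are the Young-type bound $P_\varepsilon[\bm{z};c,d]\ge\phi(\bm{z}(c),\bm{z}(d))$ recalled in the Introduction, super-additivity of $P_\varepsilon$ over subintervals with disjoint interiors, the triangle inequality for $\phi$, and continuity of the finite-energy (hence $H^1$) function $\bm{u}^\varepsilon$.

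From the $L^1$ smallness \eqref{eq:u-v} I would first extract, for each $j=1,\dots,N$, points $x_j^-\in(\gamma_j-\tfrac12\delta_1,\gamma_j)$ and $x_j^+\in(\gamma_j,\gamma_j+\tfrac12\delta_1)$ with $|\bm{u}^\varepsilon(x_j^-)-\bm{v}_j^-|<\rho$ and $|\bm{u}^\varepsilon(x_j^+)-\bm{v}_j^+|<\rho$, where $\bm{v}_j^\pm:=\bm{v}(\gamma_j\pm r)$ as in the proof of Proposition~\ref{prop:lower}: if $|\bm{u}^\varepsilon-\bm{v}|\ge\rho$ held throughout the interval $(\gamma_j-\tfrac12\delta_1,\gamma_j)$, of length $\tfrac12\delta_1$, on which $\bm{v}\equiv\bm{v}_j^-$, then $\|\bm{u}^\varepsilon-\bm{v}\|_{L^1}\ge\tfrac12\rho\delta_1$, contradicting \eqref{eq:u-v}; likewise on the right. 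Since $\delta_1<r$ and the balls $B(\gamma_i,r)$ are pairwise disjoint, these $2N$ points satisfy $x_1^-<x_1^+<x_2^-<\dots<x_N^-<x_N^+$ with $[x_j^-,x_j^+]\subset B(\gamma_j,\tfrac12\delta_1)$, and $[a,b]$ is tiled, with disjoint interiors, by the $N$ transition intervals $[x_j^-,x_j^+]$ and the $N+1$ connecting intervals $[x_j^+,x_{j+1}^-]$ (with $x_0^+:=a$ and $x_{N+1}^-:=b$). On each transition interval the Young-type bound and the triangle inequality give $P_\varepsilon[\bm{u}^\varepsilon;x_j^-,x_j^+]\ge\phi(\bm{u}^\varepsilon(x_j^-),\bm{u}^\varepsilon(x_j^+))\ge\phi(\bm{v}_j^-,\bm{v}_j^+)-2S_\rho$, so the transition intervals together contribute at least $P_0[\bm{v}]-2NS_\rho$, while each connecting interval contributes at least $0$.

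To establish (i) I would argue by contradiction. If there were $\bar x\in(\bm{u}^\varepsilon)^{-1}(D)$ with $|\bar x-\gamma_i|\ge\tfrac12\delta_1$ for every $i$, then, since $[x_i^-,x_i^+]\subset B(\gamma_i,\tfrac12\delta_1)$, the point $\bar x$ would lie in the interior of one of the connecting intervals, on whose constrained endpoint(s) $\bm{u}^\varepsilon$ is $\rho$-close to the common value of $\bm{v}$ there (equal to $\bm{v}_1^-$, resp.\ $\bm{v}_N^+$, on the two outermost intervals and to $\bm{v}_j^+=\bm{v}_{j+1}^-$ on an interior one). Splitting that interval at $\bar x$ and using $\bm{u}^\varepsilon(\bar x)\in D$ together with the triangle inequality and the definition of $\eta$, that interval contributes at least $2\eta-2S_\rho$ when it is interior (a crossing into $D$ and back) and at least $\eta-S_\rho$ when it is one of the two outermost ones; adding the transition contributions yields $P_\varepsilon[\bm{u}^\varepsilon]\ge P_0[\bm{v}]+\eta-(2N+1)S_\rho>P_0[\bm{v}]+2NS_\rho$ by the choice of $\rho$, contradicting \eqref{eq:E[u]}. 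For (ii), the restriction of the continuous map $\bm{u}^\varepsilon$ to $[x_i^-,x_i^+]\subset B(\gamma_i,\tfrac12\delta_1)$ is an arc joining $\overline{B(\bm{v}_i^-,\rho)}$ to $\overline{B(\bm{v}_i^+,\rho)}$; since the distinct zeros $\bm{v}_i^-\ne\bm{v}_i^+$ lie in different connected components of $\R^m\setminus D$ for the sets $D$ under consideration, the arc meets $D$, so $I_D[\bm{u}^\varepsilon]$ contains a point of $B(\gamma_i,\tfrac12\delta_1)$. Combining (i) and (ii) gives \eqref{lem:d-interfaces}.

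I expect the delicate part to be the energy accounting in (i): one must keep each $S_\rho$-loss explicit so that $\rho$, and hence $\varepsilon_0$, can be fixed once and for all, uniformly in $\varepsilon$ and in the admissible function $\bm{u}^\varepsilon$ — it is precisely because the right-hand side of \eqref{eq:E[u]} equals $P_0[\bm{v}]+2NS_\rho$ that the margin $\eta-(4N+1)S_\rho>0$ makes the contradiction work. The remaining soft point is part (ii), which rests on continuity of $\bm{u}^\varepsilon$ together with the standing assumption that $D$ separates the phases adjacent along $\bm{v}$ — automatic in the scalar case and in the situations where Theorem~\ref{thm:interface} is applied.
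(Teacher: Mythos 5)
Your proposal is correct and, for the substantive half of the argument, follows the paper's proof almost verbatim: the same extraction of points $x_i^{\pm}\in B(\gamma_i,\delta_1/2)$ with $|\bm{u}^\varepsilon(x_i^{\pm})-\bm{v}(x_i^{\pm})|<\rho$ from the $L^1$ bound \eqref{eq:u-v}, the same calibration of $\rho$ so that the cost $\eta$ of reaching $D$ from a zero of $F$ dominates $(4N+1)$ times the defect $S_\rho$, and the same contradiction obtained by adding the excess $\phi$-cost of an excursion into $D$ outside the layer intervals to the lower bound $P_0[\bm{v}]-2NS_\rho$ coming from the $N$ transitions (your $\eta-S_\rho$ is exactly the paper's term $\inf\{\phi(\bm{\xi}_1,\bm{\xi}_2):\bm{\xi}_1\in D,\ \bm{\xi}_2\in B(\bm{z}_j,\rho)\}$ in \eqref{diseq:E1}). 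Where you go beyond the paper is in splitting the Hausdorff distance into its two one-sided parts and treating the second one --- that every $\gamma_i$ lies within $\delta_1/2$ of $I_D[\bm{u}^\varepsilon]$ --- explicitly: the paper's proof derives extra energy only from a point of $I_D[\bm{u}^\varepsilon]$ far from $I[\bm{v}]$ and is silent about the case where some $\gamma_i$ is far from $I_D[\bm{u}^\varepsilon]$ (or $I_D[\bm{u}^\varepsilon]$ is empty), a case that produces no energy excess and hence no contradiction. Your connectedness argument, resting on the additional hypothesis that $D$ separates the zeros adjacent along $\bm{v}$, is the correct way to close that direction; you are right that this hypothesis is not in the statement of the lemma but is satisfied by the sets $D$ for which Theorem \ref{thm:interface} is meant to be applied, so your version is, if anything, the more complete one.
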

\begin{proof}
Choose $\rho>0$ small enough that
\begin{align*}
	\inf\{ & \phi(\bm{\xi}_1,\bm{\xi}_2) : \bm{z}_j\in F^{-1}(\{0\}), \bm{\xi}_1\in K, \bm{\xi}_2\in B(\bm{z}_j,\rho)\}\\
	&>4N\sup\{\phi(\bm{z}_j,\bm{\xi}_2) : \bm{z}_j\in F^{-1}(\{0\}), \bm{\xi}_2\in B(\bm{z}_j,\rho)\}.
\end{align*}
By reasoning as in Proposition \ref{prop:lower}, we obtain that for each $i$ there exist
\begin{equation*}
	x^-_{i}\in(\gamma_i-\delta_1/2,\gamma_i) \qquad \textrm{and} \qquad x^+_{i}\in(\gamma_i,\gamma_i+\delta_1/2)
\end{equation*}
such that
\begin{equation*}
	|\bm{u}^\varepsilon(x^-_{i})-\bm{v}(x^-_{i})|<\rho \qquad \textrm{and} \qquad |\bm{u}^\varepsilon(x^+_{i})-\bm{v}(x^+_{i})|<\rho.
\end{equation*}
Suppose that \eqref{lem:d-interfaces} is violated. 
Then, we deduce
\begin{align}
	P_\varepsilon[\bm{u}^\varepsilon]&\geq\sum_{i=1}^N P_\varepsilon[\bm{u}^\varepsilon;x^-_{i},x^+_{i}]\notag\\
	&+\inf\{\phi(\bm{\xi}_1,\bm{\xi}_2) : \bm{z}_j\in F^{-1}(\{0\}), \bm{\xi}_1\in K, \bm{\xi}_2\in B(\bm{z}_j,\rho)\}. \label{diseq:E1}
\end{align}
On the other hand, triangle inequality gives 
\begin{equation*}
	\phi\bigl(\bm{v}(x^+_{i}),\bm{v}(x^-_{i})\bigr)\leq\phi\bigl(\bm{v}(x^+_{i}),\bm{u}^\varepsilon(x^+_{i})\bigr)
	+\phi\bigl(\bm{u}^\varepsilon(x^+_{i}),\bm{u}^\varepsilon(x^-_{i})\bigr)+\phi\bigl(\bm{u}^\varepsilon(x^-_{i}),\bm{v}(x^-_{i})\bigr)
\end{equation*}
and as a consequence
\begin{align*}
	\phi\bigl(\bm{u}^\varepsilon(x^-_{i}),\bm{u}^\varepsilon(x^+_{i})\bigr)\geq &\, \phi\bigl(\bm{v}(x^+_{i}),\bm{v}(x^-_{i})\bigr)\\
	& -2\sup\{\phi(\bm{z}_j,\bm{\xi}_2) : \bm{z}_j\in F^{-1}(\{0\}), \bm{\xi}_2\in B(\bm{z}_j,\rho)\}. 
\end{align*}
Substituting the latter bound in \eqref{diseq:E1} and recalling that 
\begin{equation*}
	P_\varepsilon[\bm{u}^\varepsilon;x^-_{i},x^+_{i}]\geq\phi\bigl(\bm{u}^\varepsilon(x^-_{i}),\bm{u}^\varepsilon(x^+_{i})\bigr),
\end{equation*}
we infer
\begin{align*}
	P_\varepsilon[\bm{u}^\varepsilon]\geq P_0[\bm{v}]&-2N\sup\{\phi(\bm{z}_j,\bm{\xi}_2) : \bm{z}_j\in F^{-1}(\{0\}), \bm{\xi}_2\in B(\bm{z}_j,\rho)\}\\
	&+\inf\{\phi(\bm{\xi}_1,\bm{\xi}_2) : \bm{z}_j\in F^{-1}(\{0\}), \bm{\xi}_1\in K, \bm{\xi}_2\in B(\bm{z}_j,\rho)\}.
\end{align*}
For the choice of $\rho$ and assumption \eqref{eq:E[u]}, we obtain
\begin{align*}
	P_\varepsilon[\bm{u}^\varepsilon]>P_0[\bm{v}]+2N\sup\{\phi(\bm{z}_j,\bm{\xi}_2) : \bm{z}_j\in F^{-1}(\{0\}), \bm{\xi}_2\in B(\bm{z}_j,\rho)\}
	\geq P_\varepsilon[\bm{u}^\varepsilon],
\end{align*}
which is a contradiction. Hence, the bound \eqref{lem:d-interfaces} is true.
\end{proof}
The previous result and Theorem \ref{main-hyp-sys} permits to prove Theorem \ref{thm:interface}.
\begin{proof}[Proof of Theorem \ref{thm:interface}]
Let $\varepsilon_0>0$ so small that the assumptions on the initial data \eqref{|u_0-v|_1}, \eqref{energy<E[v]+exp} 
imply that $\bm{u}_0^\varepsilon$ satisfy \eqref{eq:u-v} and \eqref{eq:E[u]} for all $\varepsilon\in(0,\varepsilon_0)$.
From Lemma \ref{lem:interface} it follows that
\begin{equation}\label{interfaces-u0}
	d(I_D[\bm{u}_0^\varepsilon], I[\bm{v}])<\tfrac12\delta_1.
\end{equation}
Now, we apply the same reasoning to $\bm{u}^\varepsilon(\cdot,t)$ for all $t\leq\exp(A/\varepsilon)$.
Assumption \eqref{eq:u-v} is satisfied for Theorem \ref{main-hyp-sys},
while \eqref{eq:E[u]} holds because $E_\varepsilon[\bm{u}^\varepsilon,\bm{u}^\varepsilon_t](t)$ is a nonincreasing function of $t$.
Then,
\begin{equation}\label{interfaces-u}
	d(I_D[\bm{u}^\varepsilon(t)], I[\bm{v}])<\tfrac12\delta_1
\end{equation}
for all $t\in(0,\exp(A/\varepsilon))$. 
Combining \eqref{interfaces-u0} and \eqref{interfaces-u}, we obtain
\begin{equation*}
	d(I_D[\bm{u}^\varepsilon(t)],I_D[\bm{u}_0^\varepsilon])<\delta_1
\end{equation*}
for all $t\in(0,\exp(A/\varepsilon))$ and the proof is complete.
\end{proof}
Then, the velocity of the transition layers is exponentially small.
Thanks to Theorem \ref{main-hyp-sys} and Theorem \ref{thm:interface}, we obtain exponentially slow motion. 
In \cite{FLM}, similar results have been obtained in the scalar case, 
by using a different method, the dynamical approach of Carr and Pego \cite{Carr-Pego}.

\vspace{0.5cm}
\textbf{Acknowledgments}. I am very grateful to C. Lattanzio and C. Mascia for their helpful advices.

\appendix

\section{Existence and uniqueness}\label{well-posed}
In this appendix we study the well-posedness of the following initial boundary problem
\begin{equation}\label{IBVP-sys}
	\begin{cases}
	\tau \bm{u}_{tt}+G(\bm{u})\bm{u}_t=\varepsilon^2\bm{u}_{xx}+\bm{f}(\bm{u}) \qquad \qquad & x\in[a,b], \, t>0,\\
	\bm{u}(x,0)=\bm{u}_0(x) & x\in[a,b],\\
	\bm{u}_t(x,0)=\bm{u}_1(x) & x\in[a,b],\\
	\bm{u}_x(a,t)=\bm{u}_x(b,t)=0 & t>0,
	\end{cases}
\end{equation} 
where $\bm{u}(x,t)\in\mathbb R^m$, $G:\mathbb R^m\rightarrow\mathbb  R^{m\times m}$, $\bm{f}:\mathbb R^m\rightarrow\mathbb R^m$ and $\varepsilon,\tau>0$.
The strategy we will use is standard and based on the semigroup theory for solutions of differential equations on Hilbert spaces 
(see Cazenave and Haraux \cite{Cazenave}, and Pazy \cite{Pazy}).
Following the ideas of the scalar case $m=1$ (cfr. \cite{Folino}) and setting $\bm{y}=(\bm{u},\bm{v})=(\bm{u},\bm{u}_t)$, 
we rewrite the first equation of \eqref{IBVP-sys} as a first order evolution equation 
\begin{equation}\label{evol-eq}
	\bm{y}_t=A_m\bm{y}+\bm{\Phi}_m(\bm{y}),
\end{equation}
where 
\begin{align}
	A_m\bm{y}&:=\left(\begin{array}{cc} 0_m & \mathbb{I}_m \\ \varepsilon^2\tau^{-1}\partial^2_x\mathbb{I}_m & 0_m \end{array} \right)\bm{y}-\bm{y} \label{eq:Am}\\
	\bm{\Phi}_m(\bm{y})&:=\bm{y}+\dfrac1\tau\left(\begin{array}{c} 0 \\ \bm{f}(\bm{u})-G(\bm{u})\bm{v} \end{array}\right).\label{eq:Fim}
\end{align}
The unknown $\bm{y}$ is considered as a function of a real (positive) variable $t$ with values on the function space $X^m={H^1([a,b])}^m\times{L^2(a,b)}^m$ with scalar product
\begin{equation*}
	\langle(\bm{u},\bm{v}),(\bm{w},\bm{z})\rangle_X:= \int_a^b (\varepsilon^2\bm{u}_x\cdot\bm{w}_x+\tau\bm{u}\cdot\bm{w}+\tau\bm{v}\cdot\bm{z})dx, 
\end{equation*}
that is equivalent to the usual scalar product in ${H^1([a,b])}^m\times{L^2(a,b)}^m$. 

\begin{prop}\label{A-m-dis}
The linear operator $A_m:D(A_m)\subset X^m\rightarrow X^m$ defined by \eqref{eq:Am} with 
\begin{equation}\label{dom-A}
	D(A_m)=\bigl\{(\bm{u},\bm{v})\in{H^2([a,b])}^m\times{H^1([a,b])}^m : \bm{u}_x(a)=\bm{u}_x(b)=0\bigr\},
\end{equation}
is m-dissipative with dense domain.
\end{prop}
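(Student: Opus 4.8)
The plan is to verify the two defining properties of an m-dissipative operator, namely dissipativity of $A_m$ and the range condition $\mathrm{Range}(\mathbb{I}-A_m)=X^m$, and then to check separately that $D(A_m)$ is dense in $X^m$. For dissipativity, I would take $\bm{y}=(\bm{u},\bm{v})\in D(A_m)$, so that $A_m\bm{y}=(\bm{v}-\bm{u},\,\varepsilon^2\tau^{-1}\bm{u}_{xx}-\bm{v})$, and compute $\langle A_m\bm{y},\bm{y}\rangle_X$ straight from the definition of the scalar product. Integrating the term $\int_a^b\varepsilon^2\bm{u}_{xx}\cdot\bm{v}\,dx$ by parts, the boundary contribution $[\varepsilon^2\bm{u}_x\cdot\bm{v}]_a^b$ vanishes since $\bm{u}_x(a)=\bm{u}_x(b)=0$, and the remaining $-\int_a^b\varepsilon^2\bm{u}_x\cdot\bm{v}_x\,dx$ cancels the $\int_a^b\varepsilon^2\bm{v}_x\cdot\bm{u}_x\,dx$ coming from the first slot. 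One is left with
\[
\langle A_m\bm{y},\bm{y}\rangle_X=-\int_a^b\bigl(\varepsilon^2|\bm{u}_x|^2+\tau|\bm{u}|^2-\tau\,\bm{u}\cdot\bm{v}+\tau|\bm{v}|^2\bigr)\,dx,
\]
and since $|\bm{u}\cdot\bm{v}|\le\tfrac12(|\bm{u}|^2+|\bm{v}|^2)$ the integrand is nonnegative, so $\langle A_m\bm{y},\bm{y}\rangle_X\le0$. (It is exactly the shift $-\bm{y}$ in the definition of $A_m$ that produces the correct signs; its effect is absorbed by the $+\bm{y}$ in $\bm{\Phi}_m$.)

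For the range condition, given $(\bm{f},\bm{g})\in X^m={H^1([a,b])}^m\times{L^2(a,b)}^m$, I would solve $(\mathbb{I}-A_m)(\bm{u},\bm{v})=(\bm{f},\bm{g})$, which unravels to $\bm{v}=2\bm{u}-\bm{f}$ together with the elliptic Neumann problem
\[
-\varepsilon^2\tau^{-1}\bm{u}_{xx}+4\bm{u}=\bm{g}+2\bm{f}\quad\text{in }(a,b),\qquad \bm{u}_x(a)=\bm{u}_x(b)=0.
\]
The bilinear form $B(\bm{u},\bm{w})=\int_a^b\bigl(\varepsilon^2\tau^{-1}\bm{u}_x\cdot\bm{w}_x+4\,\bm{u}\cdot\bm{w}\bigr)\,dx$ is bounded and coercive on ${H^1([a,b])}^m$, so Lax--Milgram produces a unique weak solution $\bm{u}\in {H^1}^m$; since $\bm{g}+2\bm{f}\in {L^2}^m$, one-dimensional elliptic regularity upgrades this to $\bm{u}\in {H^2}^m$, and testing against $H^1$ functions recovers the Neumann boundary condition. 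Then $\bm{v}=2\bm{u}-\bm{f}\in {H^1}^m$, hence $(\bm{u},\bm{v})\in D(A_m)$ and it is mapped by $\mathbb{I}-A_m$ onto $(\bm{f},\bm{g})$.

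Finally, for density one may either invoke the general fact that a dissipative operator satisfying the range condition on a reflexive (here Hilbert) space is automatically densely defined, or argue concretely: every finite linear combination of the Neumann eigenfunctions $\{1,\cos(k\pi(x-a)/(b-a)):k\ge1\}$ lies in $\{\bm{u}\in {H^2}^m:\bm{u}_x(a)=\bm{u}_x(b)=0\}$, such combinations are dense in ${H^1([a,b])}^m$, and ${H^1}^m$ is dense in ${L^2}^m$, so $D(A_m)$ is dense in $X^m$. The only places that require genuine care are the integration by parts in the dissipativity estimate (checking that the boundary terms really vanish, which is where \eqref{dom-A} is used) and the density step, since $D(A_m)$ is a proper subspace of ${H^2}^m\times {H^1}^m$ and $C^\infty_c((a,b))$ is not dense in $H^1$; the rest is standard semigroup and elliptic theory.
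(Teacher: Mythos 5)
Your proof is correct: the computation of $\langle A_m\bm{y},\bm{y}\rangle_X$ (with the boundary terms killed by the Neumann condition in \eqref{dom-A} and the cross terms $\varepsilon^2\bm{u}_x\cdot\bm{v}_x$ cancelling), the Lax--Milgram/elliptic-regularity resolution of $(\mathbb{I}-A_m)\bm{y}=(\bm{f},\bm{g})$, and the density argument are all sound. This is essentially the same route as the paper, which gives no argument of its own but delegates to the scalar case \cite[Proposition A.3]{Folino}, where exactly this standard Lumer--Phillips-type verification is carried out.
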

The proof is just a vector notation of the scalar case $m=1$ (see \cite[Proposition A.3]{Folino}).

Given a matrix $B\in\mathbb R^{m\times m}$, we denote by $\|\cdot\|_\infty$ the matrix norm induced by the vector norm $|\bm{u}|_\infty=\max|u_j|$ on $\mathbb R^m$
\begin{equation*}
	\|B\|_\infty:=\max_{1\leq i\leq m}\sum_{j=1}^m|b_{ij}|.
\end{equation*}
We suppose that $\bm{f}\in C(\mathbb R^m,\mathbb R^m)$ and
\begin{equation}\label{f-lip}
	|\bm{f}(\bm{x}_1)-\bm{f}(\bm{x}_2)|\leq L_1(K)|\bm{x}_1-\bm{x}_2|, \qquad \quad \forall\, \bm{x}_1,\bm{x}_2\in B_K.
\end{equation}
Here and below $B_K$ is the open ball of center $0$ and of radius $K$ in the relevant space. 
Regarding $G$, we suppose that $G\in C(\mathbb R^m,\mathbb R^{m\times m})$ and
\begin{equation}\label{g-lip}
	\|G(\bm{x}_1)-G(\bm{x}_2)\|_\infty\leq L_2(K)|\bm{x}_1-\bm{x}_2|, \qquad \quad \forall\, \bm{x}_1,\bm{x}_2\in B_K.
\end{equation}
Then, $\bm{f}$ and $G$ are locally Lipschitz continuous functions. 
If $\bm{f}$ satisfies \eqref{f-lip} and if $\bm{\mathcal{F}}$ is the operator defined by $(\bm{\mathcal{F}}(\bm{u}))(x):=\bm{f}(\bm{u}(x))$, 
then $\bm{\mathcal{F}}$ maps ${H^1([a,b])}^m$ into ${L^2(a,b)}^m$ and there exists $C(K)>0$ such that
\begin{equation}\label{F-lip}
	\|\bm{\mathcal{F}}(\bm{u}_1)-\bm{\mathcal{F}}(\bm{u}_2)\|_{{}_{L^2}}\leq C_1(K)\|\bm{u}_1-\bm{u}_2\|_{{}_{L^2}}, \qquad \quad \forall\, \bm{u}_1,\bm{u}_2\in B_K,
\end{equation}
where $\displaystyle{\|\bm{u}\|}^2_{{}_{L^2}}:=\int_a^b |\bm{u}|^2dx$. 

Moreover, we have
\begin{equation}\label{g(u)v}
	\|G(\bm{u})\bm{v}\|^2_{{}_{L^2}}\leq \int_a^b \|G(\bm{u})\|_{{}_{\infty}}^2|\bm{v}|^2dx\leq\max_x{\|G(\bm{u}(x))\|}_{{}_\infty}^2{\|\bm{v}\|}^2_{{}_{L^2}},
\end{equation}
for all $(\bm{u},\bm{v})\in X^m$. Using \eqref{g-lip} and \eqref{g(u)v}, we obtain
\begin{equation}\label{g(u)v-2}
	\|(G(\bm{u}_1)-G(\bm{u}_2))\bm{v}\|^2_{{}_{L^2}}\leq C_2(K){\|\bm{u}_1-\bm{u}_2\|}_{{}_{L^\infty}}^2{\|\bm{v}\|}^2_{{}_{L^2}},
\end{equation}
for all $\bm{u}_1,\bm{u}_2\in B_K$. 
It follows that the function $\bm{\Phi}_m$ defined by \eqref{eq:Fim} is a Lipschitz continuous function on bounded subsets of $X^m$. 
Indeed, for all $\bm{y}_1=(\bm{u}_1,\bm{v}_1)$, $\bm{y}_2=(\bm{u}_2,\bm{v}_2)\in X^m$ we have
\begin{align*}
	\|\bm{\Phi}_m(\bm{y}_1)-\bm{\Phi}_m(\bm{y}_2)\|_{{}_{X^m}}\leq &\; \|\bm{y}_1-\bm{y}_2\|_{{}_{X^m}} \\
	& +C\left(\|\bm{\mathcal{F}}(\bm{u}_1)-\bm{\mathcal{F}}(\bm{u}_2)\|_{{}_{L^2}}+\|G(\bm{u}_1)\bm{v}_1-G(\bm{u}_2)\bm{v}_2\|_{{}_{L^2}}\right).
\end{align*}
Let $K:=\max\{\|\bm{y}_1\|_{X^m},\|\bm{y}_2\|_{{}_{X^m}}\}$. 
We have that
\begin{align*}
	\|G(\bm{u}_1)\bm{v}_1-G(\bm{u}_2)\bm{v}_2\|_{{}_{L^2}}\leq\,& \|G(\bm{u}_1)(\bm{v}_1-\bm{v}_2)\|_{{}_{L^2}}+\|(G(\bm{u}_1)-G(\bm{u}_2))\bm{v}_2\|_{{}_{L^2}}\\
	\leq\,& C(K)(\|\bm{v}_1-\bm{v}_2\|_{{}_{L^2}}+\|\bm{u}_1-\bm{u}_2\|_{{}_{H^1}}),
\end{align*}
where the last inequality holds because $G$ is locally Lipschitz continuous and $H^1([a,b])\subset L^\infty([a,b])$ with continuous inclusion. 
From this inequality and \eqref{F-lip}, it follows that there exists a constant $L(K)$ (depending on $K$) such that
\begin{equation*}
	\|\bm{\Phi}_m(\bm{y}_1)-\bm{\Phi}_m(\bm{y}_2)\|_{{}_{X^m}}\leq L(K)\|\bm{y}_1-\bm{y}_2\|_{{}_{X^m}}.
\end{equation*}
Therefore, we can proceed in the same way of the scalar case $m=1$. 
For all $\bm{x}\in X^m$ the Cauchy problem \eqref{evol-eq}, with $A_m$ and $\bm{\Phi}_m$ defined by \eqref{eq:Am}-\eqref{dom-A} and \eqref{eq:Fim}, 
$\bm{f},G$ locally Lipschitz continuous and initial data $\bm{y}(0)=\bm{x}$ has a unique mild solution on $[0,T(\bm{x}))$, that is a function $\bm{y}\in C([0,T(\bm{x}),X^m)$ solving the problem
\begin{equation*}	
	\bm{y}(t)=S_m(t)\bm{x}+\int_0^t S_m(t-s)\bm{\Phi}_m(\bm{y}(s))ds, \qquad \quad \forall\, t\in[0,T(\bm{x})),
\end{equation*}
where $(S_m(t))_{t\geq0}$ is the contraction semigroup in $X^m$, generated by $A_m$. 
In particular, if $\bm{x}=(\bm{u}_0,\bm{u}_1)\in D(A_m)$, then $\bm{y}=(\bm{u},\bm{u}_t)$ is a classical solution, 
that is a solution of \eqref{IBVP-sys} for $t\in[0,T(\bm{x}))$ satisfying
\begin{equation*}
	(\bm{u},\bm{u}_t)\in C([0,T(\bm{x})),D(A_m))\cap C^1([0,T(\bm{x})),X^m).
\end{equation*}
In order to show the global existence of the solution we define the energy
\begin{equation}\label{energy}
	E[\bm{u},\bm{u}_t](t):=\int_a^b\biggl[\frac\tau2|\bm{u}_t(x,t)|^2+\frac{\varepsilon^2}2 |\bm{u}_x(x,t)|^2+F(\bm{u}(x,t))\biggr]dx,
\end{equation}
where $\bm{f}(\bm{u})=-\nabla F(\bm{u})$. 
Observe that the energy \eqref{energy} is well-defined for mild solutions $(\bm{u},\bm{u}_t)\in C([0,T],X^m)$. 
Using the same procedure of \cite{Folino}, we can prove the following result.
\begin{prop}\label{prop-var-ener}
Assume that $\bm{f}$ and $G$ are locally Lipschitz continuous. If $(\bm{u},\bm{u}_t)\in C([0,T],X^m)$ is a mild solution, then 
\begin{equation}\label{variazione-energia}
	\int_0^T\int_a^b G(\bm{u})\bm{u}_t\cdot\bm{u}_t dxdt=E[\bm{u},\bm{u}_t](0)-E[\bm{u},\bm{u}_t](T).
\end{equation}
\end{prop}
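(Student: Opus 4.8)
The plan is to establish the energy identity \eqref{variazione-energia} first for classical solutions, where all manipulations are justified, and then pass to mild solutions by a density argument. For a classical solution $(\bm u,\bm u_t)\in C([0,T],D(A_m))\cap C^1([0,T],X^m)$, I would multiply the first equation of \eqref{IBVP-sys} by $\bm u_t$ and integrate over $[a,b]$. The term $\tau\bm u_{tt}\cdot\bm u_t$ integrates to $\frac{d}{dt}\int_a^b\frac\tau2|\bm u_t|^2dx$; the term $\varepsilon^2\bm u_{xx}\cdot\bm u_t$ is integrated by parts in $x$, producing a boundary term $\varepsilon^2[\bm u_x\cdot\bm u_t]_a^b$ which vanishes by the homogeneous Neumann conditions $\bm u_x(a,t)=\bm u_x(b,t)=0$, leaving $-\frac{d}{dt}\int_a^b\frac{\varepsilon^2}2|\bm u_x|^2dx$; and the reaction term gives $\bm f(\bm u)\cdot\bm u_t=-\nabla F(\bm u)\cdot\bm u_t=-\frac{d}{dt}F(\bm u)$. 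Collecting, one gets
\begin{equation*}
	\frac{d}{dt}E[\bm u,\bm u_t](t)=-\int_a^b G(\bm u)\bm u_t\cdot\bm u_t\,dx,
\end{equation*}
and integrating in $t$ from $0$ to $T$ yields \eqref{variazione-energia}. To make the differentiations rigorous I would note that for $(\bm u,\bm u_t)\in C([0,T],D(A_m))\cap C^1([0,T],X^m)$ the maps $t\mapsto\|\bm u_t\|_{L^2}^2$, $t\mapsto\|\bm u_x\|_{L^2}^2$ are $C^1$, and $t\mapsto\int_a^b F(\bm u)\,dx$ is $C^1$ with the stated derivative since $F\in C^3$ and $\bm u\in C^1([0,T],L^2)\cap C([0,T],H^2)\subset C([0,T],L^\infty)$.

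For the density step, I would take $\bm x\in X^m$ arbitrary, choose a sequence $\bm x_n\in D(A_m)$ with $\bm x_n\to\bm x$ in $X^m$ (possible since $D(A_m)$ is dense by Proposition \ref{A-m-dis}), and let $\bm y_n=(\bm u^n,\bm u^n_t)$, $\bm y=(\bm u,\bm u_t)$ be the corresponding solutions. By continuous dependence on initial data for the mild solution (which follows from the Lipschitz estimate on $\bm\Phi_m$ together with Gronwall, exactly as in the scalar case), $\bm y_n\to\bm y$ in $C([0,T],X^m)$. Since \eqref{variazione-energia} holds for each $\bm y_n$, it suffices to pass to the limit in each term: $\int_0^T\|\bm u^n_t\|_{L^2}^2$-type terms and $\|\bm u^n_x\|_{L^2}^2$ converge because convergence in $X^m=H^1\times L^2$ controls exactly these norms; $\int_a^b F(\bm u^n(x,t))\,dx\to\int_a^b F(\bm u(x,t))\,dx$ because $\bm u^n\to\bm u$ in $C([0,T],H^1)\hookrightarrow C([0,T],L^\infty)$ and $F$ is continuous; and the dissipation integral $\int_0^T\int_a^b G(\bm u^n)\bm u^n_t\cdot\bm u^n_t$ converges using $G$ continuous (hence $G(\bm u^n)\to G(\bm u)$ in $C([0,T],L^\infty)$ via the $H^1\hookrightarrow L^\infty$ embedding) and $\bm u^n_t\to\bm u_t$ in $C([0,T],L^2)$, so the bilinear-in-$\bm u_t$ expression converges.

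The main obstacle I anticipate is making the formal computation fully rigorous at the level of mild solutions without invoking extra regularity — that is, justifying that $t\mapsto E[\bm u,\bm u_t](t)$ is absolutely continuous and differentiating under the integral sign. The cleanest route, as I sketched, is to avoid this entirely by proving the identity first for classical solutions (where $D(A_m)$-regularity makes everything legitimate) and only then invoking density and continuous dependence; the delicate points there are ensuring the approximating initial data can be chosen in $D(A_m)$ (guaranteed by density of $D(A_m)$), that the solutions $\bm y_n$ exist on a common interval $[0,T]$ (which requires either working on $[0,T']$ for $T'<T(\bm x)$ and using that $T(\bm x_n)$ is lower semicontinuous, or invoking the global existence already established via the energy bound), and that the nonlinear terms pass to the limit — all of which reduce to the Lipschitz estimates \eqref{F-lip}, \eqref{g(u)v-2} and the Sobolev embedding $H^1([a,b])\hookrightarrow L^\infty([a,b])$ already recorded above. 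Since the excerpt explicitly says the proof uses ``the same procedure of \cite{Folino},'' I would state the classical-solution identity as the core computation and then remark that the extension to mild solutions follows by the standard density argument, citing the scalar case for the routine details.
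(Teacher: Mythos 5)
Your proposal is correct and follows essentially the same route as the paper: the identity is first derived for classical solutions by taking the scalar product with $\bm u_t$, integrating by parts with the Neumann conditions, and integrating in time, and is then extended to mild solutions by approximating the initial data in $D(A_m)$ and using continuous dependence (the paper cites \cite[Proposition 4.3.7]{Cazenave} for this step). Your discussion of why each term passes to the limit is in fact more detailed than the paper's, which leaves that verification implicit.
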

\begin{proof}
Let $T>0$ and $\bm{u}$ be a classical solution of the problem \eqref{IBVP-sys}. 
Taking the scalar product with $\bm{u_t}$ and integrating on $[a,b]\times(0,T)$, we have 
\begin{equation*}
	\int_0^T\int_a^b\left(\tau\bm{u}_t\cdot\bm{u}_{tt}+G(\bm{u})\bm{u}_t\cdot\bm{u}_t\right)dxdt=
	\int_0^T\int_a^b\left(\varepsilon^2 \bm{u}_t\cdot\bm{u}_{xx}+\bm{f}(\bm{u})\cdot\bm{u}_t\right)dxdt.
\end{equation*}
Using integration by parts and the homogeneous Neumann boundary conditions, we obtain
\begin{align*}
	\int_a^b\tau\bm{u}_t\cdot\bm{u}_{tt}\,dx& =\tau\langle\bm{u}_t,\bm{u}_{tt}\rangle_{{}_{L^2}}=
	\frac{d}{dt}\left(\frac\tau2\|\bm{u}_t\|^2_{{}_{L^2}}\right),\\
	\int_a^b\varepsilon^2\bm{u}_t\cdot\bm{u}_{xx}\,dx& =-\varepsilon^2\int_a^b\bm{u}_{tx}\cdot\bm{u}_xdx=
	-\frac{d}{dt}\left(\frac{\varepsilon^2}2\|\bm{u}_x\|^2_{{}_{L^2}}\right).
\end{align*}
Therefore, it follows that
\begin{align*}
	\int_0^T\int_a^b G(\bm{u})\bm{u}_t\cdot\bm{u}_t\,dxdt &=\int_a^b\left[\frac\tau2 |\bm{u}_t(x,0)|^2-\frac\tau2|\bm{u}_t(x,T)|^2\right]dx\\
	&+\int_a^b\left[\frac{\varepsilon^2}2|\bm{u}_x(x,0)|^2-\frac{\varepsilon^2}2|\bm{u}_x(x,T)|^2\right]dx\\
	&+\int_a^b\left[F(\bm{u}(x,0))-F(\bm{u}(x,T))\right]dx. 
\end{align*}
Using the definition of energy \eqref{energy}, we have \eqref{variazione-energia} for classical solutions.

If $\bm{x}\in D(A_m)$ the solution is classical and \eqref{variazione-energia} holds. 
For $\bm{x}\in X^m\backslash D(A_m)$, we use the continuous dependence on the initial data of the solution (cfr. \cite[Proposition 4.3.7]{Cazenave}).
Let us consider $\bm{x}_n\in D(A_m)$ such that $\bm{x}_n\rightarrow \bm{x}$ in $X^m$. 
For the corresponding solution $\bm{y}_n=(\bm{u}_n,(\bm{u}_n)_t)$, \eqref{variazione-energia} is satisfied; 
by passing to the limit and using Proposition 4.3.7 of \cite{Cazenave}, we obtain \eqref{variazione-energia} for $\bm{y}=(\bm{u},\bm{u}_t)$.
\end{proof}
If we assume that $G(\bm{u})$ is positive semi-definite for all $\bm{u}\in\mathbb R^m$, 
then the energy is a nonincreasing function of $t$ along the solutions of \eqref{IBVP-sys}. 
Furthermore, if $G(\bm{u})$ is positive definite for all $\bm{u}\in\mathbb R^m$, then there exists a constant $\alpha>0$ such that
\begin{equation*}
	\alpha\int_0^T\int_a^b |\bm{u}_t|^2 dxdt\leq E[\bm{u},\bm{u}_t](0)-E[\bm{u},\bm{u}_t](T).
\end{equation*}
Therefore, the initial boundary value problem \eqref{IBVP-sys} is globally well-posed in the energy space ${H^1([a,b])}^m\times {L^2(a,b)}^m$.
\begin{thm}\label{global existence}
Assume that $\bm{f},G$ are locally Lipschitz continuous, 
\begin{equation}\label{hp-g}
	G(\bm{x})\bm{y}\cdot \bm{y}\geq0, \qquad \quad \forall\,\bm{x},\bm{y}\in\mathbb R^m
\end{equation}
and that there is $L>0$ such that for any $|\bm x|>L$
\begin{equation}\label{hp-f}
	F(\bm{x})\geq C|\bm x|^2, \qquad \quad \mbox{ for some }\, C\in\mathbb{R},
\end{equation}
where $\bm{f}(\bm x):=-\nabla F(\bm x)$. 
Then, for any $(\bm{u}_0,\bm{u}_1)\in {H^1([a,b])}^m\times {L^2(a,b)}^m$ there exists a unique mild solution of \eqref{IBVP-sys}
\begin{equation*}
	(\bm{u},\bm{u}_t)\in C\left([0,\infty), {H^1([a,b])}^m\times {L^2(a,b)}^m\right).
\end{equation*}
\end{thm}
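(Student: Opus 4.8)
The plan is to combine the local well-posedness already established with an a priori bound coming from the energy identity of Proposition~\ref{prop-var-ener}. Since $A_m$ is m-dissipative with dense domain (Proposition~\ref{A-m-dis}) and $\bm{\Phi}_m$ is Lipschitz continuous on bounded subsets of $X^m$, the standard semigroup theory for semilinear evolution equations (Cazenave--Haraux \cite{Cazenave}, Pazy \cite{Pazy}) provides, for every $\bm{x}=(\bm{u}_0,\bm{u}_1)\in X^m$, a unique mild solution $\bm{y}=(\bm{u},\bm{u}_t)\in C([0,T(\bm{x})),X^m)$ on a maximal interval, together with the blow-up alternative: either $T(\bm{x})=\infty$, or $\|\bm{y}(t)\|_{{}_{X^m}}\to\infty$ as $t\to T(\bm{x})^-$. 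Hence it suffices to show that $\|\bm{y}(t)\|_{{}_{X^m}}$ stays bounded by a quantity that is finite on every bounded interval and does not depend on $T(\bm{x})$.

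First I would observe that $E[\bm{u}_0,\bm{u}_1]$ defined in \eqref{energy} is finite, because $\bm{u}_0\in {H^1([a,b])}^m\subset L^\infty$ and $F$ is continuous. Fix any $T<T(\bm{x})$: applying Proposition~\ref{prop-var-ener} on $[0,T]$ and using the positive semi-definiteness \eqref{hp-g} of $G$, the left-hand side of \eqref{variazione-energia} is nonnegative, so
\begin{equation*}
	E[\bm{u},\bm{u}_t](t)\leq E[\bm{u}_0,\bm{u}_1]=:M, \qquad\quad \forall\,t\in[0,T].
\end{equation*}
Next, from \eqref{hp-f} and the continuity (hence boundedness below) of $F$ on $\{|\bm{x}|\leq L\}$, there is a constant $K_0\geq0$ with $F(\bm{x})\geq C|\bm{x}|^2-K_0$ for all $\bm{x}\in\R^m$. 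Plugging this into the definition of $E$ gives, for all $t\in[0,T]$,
\begin{equation*}
	\tfrac\tau2\|\bm{u}_t(\cdot,t)\|^2_{{}_{L^2}}+\tfrac{\varepsilon^2}2\|\bm{u}_x(\cdot,t)\|^2_{{}_{L^2}}+C\|\bm{u}(\cdot,t)\|^2_{{}_{L^2}}\leq M+K_0(b-a).
\end{equation*}

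If $C>0$ this bounds $\|\bm{u}_t\|_{{}_{L^2}}$, $\|\bm{u}_x\|_{{}_{L^2}}$ and $\|\bm{u}\|_{{}_{L^2}}$ uniformly in $t$, hence $\|\bm{y}(t)\|_{{}_{X^m}}$; this is in particular the case in the body of the paper, where \eqref{F1} allows one to take $C=0$. If $C\leq0$ a short bootstrap is needed: the last inequality yields $\|\bm{u}_t(\cdot,t)\|^2_{{}_{L^2}}\leq C_1+C_2\|\bm{u}(\cdot,t)\|^2_{{}_{L^2}}$ with $C_1,C_2\geq0$ depending only on $M,K_0,|C|,\tau$, while integrating $\bm{u}_t$ in time gives $\|\bm{u}(\cdot,t)\|_{{}_{L^2}}\leq\|\bm{u}_0\|_{{}_{L^2}}+\int_0^t\|\bm{u}_t(\cdot,s)\|_{{}_{L^2}}\,ds$. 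Combining the two and applying Gronwall's lemma bounds $\|\bm{u}(\cdot,t)\|_{{}_{L^2}}$ — and then, going back, $\|\bm{u}_t(\cdot,t)\|_{{}_{L^2}}$ and $\|\bm{u}_x(\cdot,t)\|_{{}_{L^2}}$ — by an expression of the form $(\|\bm{u}_0\|_{{}_{L^2}}+ct)e^{ct}$, which is finite for every finite $t$ and independent of the chosen $T$. In either case $\|\bm{y}(t)\|_{{}_{X^m}}$ cannot blow up in finite time, so the blow-up alternative forces $T(\bm{x})=\infty$, which is the assertion.

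The only genuinely delicate point is this last step when $C$ in \eqref{hp-f} is non-positive: the bound $E(t)\leq M$ by itself does not control $\|\bm{u}(\cdot,t)\|_{{}_{L^2}}$, and one has to close the estimate via the Gronwall argument above; everything else is the routine verification of the hypotheses of the semigroup framework already assembled in this appendix.
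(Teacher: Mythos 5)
Your proposal is correct and follows essentially the same route as the paper, which likewise combines the local mild solution from the semigroup framework (Propositions \ref{A-m-dis} and the local Lipschitz property of $\bm{\Phi}_m$) with the a priori bound coming from the energy identity of Proposition \ref{prop-var-ener} and \eqref{hp-g}, and then invokes the blow-up alternative; the paper simply refers to the scalar case in \cite{Folino} for these details. The only quibble is the sentence claiming the case $C=0$ falls under the branch ``$C>0$'' (with $C=0$ the displayed inequality does not control $\|\bm{u}(\cdot,t)\|_{L^2}$ by itself), but your subsequent Gronwall argument for $C\leq0$ covers this, so the proof is complete.
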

Thanks to Proposition \ref{A-m-dis} and Proposition \ref{prop-var-ener}, 
the proof is just a vector notation of the scalar case (see \cite[Theorem A.7]{Folino}).


\begin{thebibliography}{99}
\bibliographystyle{amsplain}

\bibitem{AlikBateFusc91}
N. D. Alikakos, P. W. Bates and G. Fusco.
Slow motion for the Cahn--Hilliard equation in one space dimension.
{\it J. Differential Equations}, {\bf 90} (1991), 81--135.

\bibitem{AlikMcKi}
N. D. Alikakos and W. R. McKinney.
Remarks on the equilibrium theory for the Cahn--Hilliard equation in one space dimension.
In: K. J. Brown and A. A. Lacey (eds.),
{\it Reaction-Diffusion Equations}, 
Oxford University Press, London, 1990.

\bibitem{Allen-Cahn}
S. Allen and J. Cahn.
A microscopic theory for antiphase boundary motion and its application to antiphase domain coarsening.
{\it Acta Metall.}, {\bf 27} (1979), 1085--1095.

\bibitem{Bates-Xun1}
P. W. Bates and J. Xun.
Metastable patterns for the Cahn--Hilliard equation: Part I.
{\it J. Differential Equations}, {\bf 111} (1994), 421--457.

\bibitem{Bates-Xun2}
P. W. Bates and J. Xun.
Metastable patterns for the Cahn--Hilliard equation: Part II. Layer dynamics and slow invariant manifold.
{\it J. Differential Equations}, {\bf 117} (1995), 165--216.

\bibitem{Bel-Na-No}
G. Bellettini, A. Nayam and M. Novaga.
$\Gamma$-type estimates for the one-dimensional Allen--Cahn's action.
{\it Asymptotic Analysis}, {\bf 94} (2015), 161--185.

\bibitem{Bel-Nov-Orl}
G. Bellettini, M. Novaga and G. Orlandi. 
Time-like minimal submanifolds as singular limits of nonlinear wave equations. 
{\it Physica D}, {\bf 239} (2010), 335--339.

\bibitem{Be-Or-Sm}
F. Bethuel, G. Orlandi and D. Smets.
Slow motion for gradient systems with equal depth multiple-well potentials.
{\it J. Differential Equations}, {\bf 250} (2011), 53--94.

\bibitem{Be-Sm}
F. Bethuel and D. Smets.
Slow motion for equal depth multiple-well gradient systems: The degenerate case.
{\it Discrete and Continuous Dynamical Systems}, {\bf 33} (2013), 67--87.

\bibitem{Bron-Hilh}
L. Bronsard and D. Hilhorst.
On the slow dynamics for the Cahn--Hilliard equation in one space dimension.
{\it Proc. Roy. Soc. London, A}, {\bf 439} (1992), 669--682.

\bibitem{Bron-Kohn}
L. Bronsard and R. Kohn.
On the slowness of phase boundary motion in one space dimension.
{\it Comm. Pure Appl. Math.}, {\bf 43} (1990), 983--997.

\bibitem{Carr-Pego}
J. Carr and R. L. Pego.
Metastable patterns in solutions of $u_t=\varepsilon^2u_{xx}-f(u)$.
{\it Comm. Pure Appl. Math.}, {\bf 42} (1989), 523--576.

\bibitem{Carr-Pego2}
J. Carr and R. L. Pego.
Invariant manifolds for metastable patterns in $u_t=\varepsilon^2u_{xx}-f(u)$.
{\it Proc. Roy. Soc. Edinburgh Sect. A}, {\bf 116} (1990), 133--160.
	
\bibitem{Cat}
C. Cattaneo.
Sulla conduzione del calore.
{\it Atti del Semin. Mat. e Fis. Univ. Modena}, {\bf 3} (1948), 83--101.

\bibitem{Cazenave}
T. Cazenave and A. Haraux.
{\it An Introduction to Semilinear Evolution Equations},
(Clarendon Press, Oxford, 1998).

\bibitem{Chen}
X. Chen.
Generation, propagation, and annihilation of metastable patterns.
{\it J. Differential Equations}, {\bf 206} (2004), 399--437.

\bibitem{DunbOthm86}
S. R. Dunbar and H. G. Othmer.
On a nonlinear hyperbolic equation describing transmission lines, cell movement, and branching random walks.
In:  Othmer H.G. (ed.),
\textit{Nonlinear oscillations in biology and chemistry},
Lecture Notes in Biomath. 66, Springer-Verlag Berlin, 1986.

\bibitem{Folino}
R. Folino.
Slow motion for a hyperbolic variation of Allen--Cahn equation in one space dimension.
{\it J. Hyperbolic Differ. Equ.}, {\bf 14} (2017), 1--26.

\bibitem{FLM}
R. Folino, C. Lattanzio and C. Mascia.
Metastable dynamics for hyperbolic variations of the Allen--Cahn equation.
{\it Commun. Math. Sci.}, {\bf 15} (2017), 2055--2085.

\bibitem{FLM19}
R. Folino, C. Lattanzio and C. Mascia.
Slow dynamics for the hyperbolic Cahn--Hilliard equation in one space dimension.
{\it Math. Meth. Appl. Sci.}, (2019), https://doi.org/10.1002/mma.5525.

\bibitem{FLMS17}
R. Folino, C. Lattanzio, C. Mascia and M. Strani.
Metastability for nonlinear convection--diffusion equations.
{\it Nonlinear Differ. Equ. Appl.}, (2017) 24:35.

\bibitem{Fusco-Hale}
G. Fusco and J. Hale.
Slow-motion manifolds, dormant instability, and singular perturbations.
{\it J. Dynamics Differential Equations}, {\bf 1} (1989), 75--94.

\bibitem{Gallay-Joly}
Th. Gallay and R. Joly.
Global stability of travelling fronts for a damped wave equation with bistable nonlinearity.
{\it Ann. Scient. Ec. Norm. Sup.}, {\bf 42} (2009), 103--140.

\bibitem{Gol}
S. Goldstein.
On diffusion by discontinuous movements and on the telegraph equation.
{\it Quart. J. Mech. Appl. Math.}, {\bf 4} (1951), 129--156.

\bibitem{Grant}
C. P. Grant.
Slow motion in one-dimensional Cahn--Morral systems.
{\it SIAM J. Math. Anal.}, {\bf 26} (1995), 21--34.

\bibitem{Hade99}
K. P. Hadeler.
Reaction transport systems in biological modelling.
In: Capasso V. and Diekmann O. (eds.),
\textit{Mathematics inspired by biology (Martina Franca, 1997)},
Lecture Notes in Math. 1714, Springer-Verlag Berlin, 1999.

\bibitem{Hillen1}
T. Hillen. 
A Turing model with correlated random walk. 
{\it J. of Math. Biology}, {\bf 35} (1996), 49--72.

\bibitem{Hillen2}
T. Hillen.
Qualitative analysis of hyperbolic random walk systems.
{\it SFB 382, Report No. \emph{43}}, (1996).

\bibitem{Hillen3}
T. Hillen.
Invariance Principles for Hyperbolic Random Walk Systems.
{\it J. Math. Analysis Appl.}, {\bf 210} (1997), 360--374.

\bibitem{Holmes}
E. E. Holmes.
Are diffusion models too simple? A comparison with telegraph models of invasion.
{\it American Naturalist}, {\bf 142} (1993), 779--795.

\bibitem{Jerrard}
R. L. Jerrard. 
Defects in semilinear wave equations and timelike minimal surfaces in Minkowski space. 
{\it Anal. PDE}, {\bf 4} (2011), 285--340.

\bibitem{JP89a}
D. D. Joseph and L. Preziosi.
Heat waves.
{\it Rev. Modern Phys.}, {\bf 61} (1989), 41--73.

\bibitem{JP89b}
D. D. Joseph and L. Preziosi.
Addendum to the paper: ``Heat waves'' [Rev. Modern Phys. 61 (1989) no. 1, 41--73].
{\it Rev. Modern Phys.}, {\bf 62} (1990), 375--391.
	
\bibitem{Kac}
M. Kac.
A stochastic model related to the telegrapher's equation.
{\it Rocky Mountain J. Math.}, {\bf 4} (1974), 497--509.	
	
\bibitem{Kal-VdV-Wan}
W. D. Kalies, R. C. A. M. Vandervorst and T. Wanner.
Slow-motion in higher-order systems and $\Gamma$-convergence in one space dimension.
{\it Nonlinear analysis: Theory, Methods and Applications}, {\bf 44} (2001), 33--57.	

\bibitem{KreiKrei86}
G. Kreiss and H.-O. Kreiss.
Convergence to steady state of solutions of Burgers' equation.
{\it Appl. Numer. Math.}, {\bf 2} (1986), 161--179.

\bibitem{LafoOMal95}
J. G. L. Laforgue and R. E. O'Malley Jr..
Shock layer movement for Burgers' equation.
{\it SIAM J. Appl. Math.}, {\bf 55} (1995), 332--347.

\bibitem{LMPS}
C. Lattanzio, C. Mascia, R. G. Plaza and C. Simeoni.
Analytical and numerical investigation of traveling waves for the Allen--Cahn model with relaxation.
{\it Math. Models and Methods in Appl. Sci.}, {\bf 26} (2016), 931--985.

\bibitem{Mascia-Strani}
C. Mascia and M. Strani.
Metastability for nonlinear parabolic equations with application to scalar viscous conservation laws.
{\it SIAM J. Math. Anal.}, {\bf 45} (2013), 3084--3113.

\bibitem{MenFedHor}
V. Mendez, S. Fedotov and W. Horsthemke.
{\it Reaction-Transport Systems: Mesoscopic Foundations, Fronts, and Spatial Instabilities},
(Springer-Verlag, Berlin, 2010).

\bibitem{Otto-Rez}
F. Otto and M. G. Reznikoff.
Slow motion of gradient flows.
{\it J. Differential Equations}, {\bf 237} (2007), 372--420.

\bibitem{Pazy}
A. Pazy,
{\it Semigroups of Linear Operators and Applications to Partial Differential Equations},
(Springer, New York, 1983).

\bibitem{ReynWard95}
L. G. Reyna and M. J. Ward.
On the exponentially slow motion of a viscous shock.
{\it Comm. Pure Appl. Math.}, {\bf 48} (1995), 79--120.

\bibitem{Sternberg}
P. Sternberg.
The effect of a singular perturbation on nonconvex variational problems.
{\it Arch. Rat. Mech. Anal.}, {\bf 101} (1988), 209--260.

\bibitem{Strani3}
M. Strani.
On the metastable behavior of solutions to a class of parabolic systems.
{\it Asymptot. Anal.}, {\bf 90} (2014), 325--344.


\bibitem{Strani2}
M. Strani.
Metastable dynamics of internal interfaces for a convection-reaction-diffusion equation.
{\it Nonlinearity}, {\bf 28} (2015), 4331--4368.

\bibitem{Strani}
M. Strani.
Slow dynamics in reaction-diffusion systems.
{\it Asymptot. Anal.}, {\bf 98} (2016), 131--154.

\bibitem{Tay}
G. I. Taylor.
Diffusion by continuous movements.
{\it Proc. London Math. Soc.}, {\bf 20} (1920), 196--212.

\bibitem{Zau}
E. Zauderer.
{\it Partial Differential equations of applied mathematics},
(Wiley, New York, 1983).

\end{thebibliography}
\end{document}